\def\R{{\mathbb R}}
\def\Sn1{{\mathbb S}^{n-1}}
\def\S1{{\mathbb S}^1}
\def\N{{\mathbb N}}
\def\C{{\mathcal C}}
\def\F{{\mathcal F}}
\def\O{{\mathcal O}}
\def\Mdl{{\mathcal M_{*d}}}
\def\Mdu{{\mathcal M^*_d}}
\def\Md{{\mathcal M_d}}
\def\mo#1{{\mathrm{#1}}}
\newtheorem{theorem}{Theorem}
\newtheorem{lemma}{Lemma}
\newtheorem{proposition}{Proposition}
\newtheorem{corollary}{Corollary}
\newtheorem{definition}{Definition}
\newtheorem{example}{Example}
\numberwithin{equation}{section}
\begin{document}

\title{Box counting dimensions of generalised fractal nests}

\author{Sini\v sa Mili\v ci\'c\footnote{smilicic@unipu.hr}\\
  \small{Juraj Dobrila University of Pula}\\ \small{Faculty of
    Informatics}\\ \small{52 100 Pula, Croatia}}

\maketitle

\begin{abstract}
  Fractal nests are sets defined as unions of unit $n$-spheres scaled
  by a sequence of $k^{-\alpha}$ for some $\alpha>0$. In this article
  we generalise the concept to subsets of such spheres and find the
  formulas for their box counting dimensions. We introduce some novel
  classes of parameterised fractal nests and apply these results to
  compute the dimensions with respect to these parameters. We also
  show that these dimensions can be seen numerically. These results
  motivate further research that may explain the unintuitive behaviour
  of box counting dimensions for nest-type fractals, and in general
  the class of sets where the box-counting dimension differs from the
  Hausdorff dimension.
\end{abstract}

\section{Introduction}

Research into rectifiability (eg.\ \cite{tricot}) has given some
unexpected results in the differences between the Hausdorff dimension
\cite[pg.\ 171]{federer} and the box counting dimension of some
countable unions of sets and their smooth generalisations \cite[pg.\
121-122]{tricot}, such as unrectifiable spirals. Recently, progress
has been made in the application of the box counting dimension in the
analysis of complex zeta functions, \cite{lazura}. We note especially
the discovery of various interesting properties, including a
relationship of Lapidus-style zeta functions with the Riemann zeta
function and the generalisation of the concept of the box dimension
to complex dimensions. Fractal nests, as presented and analysed in
\cite{lazura}, behave in an unexpected way with respect to the
appropriate exponents. There are two general types of behaviour of
fractal nests. The well understood type of behaviour of fractal nests
concerns sets that locally resemble Cartesian products of fractals
\cite[pg.~227]{lazura}, \cite[Remark 6.]{zuzu}. In such sets, the
dimension behaves naturally as the sum of the dimensions of ``base''
sets. What remains less understood are dimensions of fractal nests of
centre type. In this case, the dimension is product-like in terms of
dimensions of underlying elements, lacking an intuitive geometric
interpretation.

This article focuses on a more classical approach to the box-counting
dimension, giving examples that may further the understanding of how
the box counting dimension behaves with respect to countable unions of
similar sets. Whereas in \cite{lazura} and \cite{zuzu} the fractal
nests studied are based on $(n-1)$-spheres (hyper-spheres), we study
fractal nests $S_\alpha$ based on fractal subsets of box counting
dimension $\delta$ of such spheres under similar scaling. Our results
are compatible with the cited ones, having them as limit cases of full
dimension, $\delta=(n-1)$; notably, the dimension are
$$\dim_B S_\alpha = \frac{\delta+1}{\alpha+1}$$ for the centre type
and $$\dim_B S_\alpha = \delta +\frac1{\alpha+1}$$ for the outer
type. The proofs of these results hint at some more general geometric
and topological properties.

This paper is divided into five sections. After this introduction we
present the main concepts and results of this paper -- a dimensional
analysis of generalised fractal nests, followed by applications of
these results to novel fractal sets with numerical examples. After
that, we provide the proofs of our main results and in the final
section we remark on some issues and problems that naturally arise
from these investigations.

\section{Generalised fractal nests}

\subsection{Box counting dimension}

We start with the definition of the box counting dimension as stated
in \cite[pg.\ 28]{falconer} as the alternative definition. By an
$\epsilon$-mesh in $\R^n$ we understand the partition of $\R^n$ into
disjoint (except possibly at the border) $n$-cubes of side $\epsilon$.

\begin{definition}
  Let $S\subseteq\R^n$ be a bounded Borel set. By $N_{\epsilon}(S)$ we
  denote the number of such $n$-cubes of the $\epsilon$-mesh of $\R^n$
  that intersect $S$. We define the {\bf upper (lower) box counting
    dimension} of $S$ by
  \begin{align*}
    \overline{\dim}_BS&=\inf\left\{\delta\,\Big|\,\epsilon^{\delta}N_\epsilon(S)\to0\right\}.\\
    \bigg(\underline\dim_BS&=\sup\left\{\delta\,\Big|\,\epsilon^{\delta}N_\epsilon(S)\to+\infty\right\}\bigg).
  \end{align*}
\end{definition}

\begin{example}\label{mkocka}
  The unit $m$-cube $K_m=[0,1]^m\times\{0\}^{n-m}$ in $\R^n$ with $m\leq n$
  has $$\overline{\dim}_B K_m = \underline\dim_B K_m = m$$ because it
  intersects $\left\lceil\epsilon^{-1}\right\rceil^m$ $n$-cubes of side
  $\epsilon$. Hence, for various $\delta$ we have, as $\epsilon\to0$,
  $$\epsilon^\delta\left\lceil\epsilon^{-1}\right\rceil^m\to
  \begin{cases} 0, \hbox{ for }\delta>m,\\
    1, \hbox{ for }\delta=m,\\
    +\infty,\hbox{ for }\delta<m.\end{cases}$$
\end{example}

The box counting dimension of a set $S$ can be defined in terms of
other counting functions, such as the maximum number of disjoint
$\epsilon$-balls centred on points of $S$, the minimal number of
$\epsilon$-balls needed to cover $S$, similar constructions in
equivalent metrics, etc. \cite[pg.\ 30]{falconer}.

One important reformulation of the box counting dimension is the
Min\-kow\-ski-Bouligand dimension. It is formulated in terms of the
Lebesgue measure in the ambient space and constructs a natural
``contents'' function at every dimension, in that regard similar to
the Hausdorff dimension and measure.

As $\epsilon$-balls used here and in other literature correspond to
the Euclidean metric, we need to compensate for the coefficient for
volume of the ball characteristic to this metric and
dimension, $$\gamma_{x}=\frac{\pi^{\frac x2}}{\Gamma(\frac x2+1)}.$$
For further discussion of this coefficient and its use in the
Min\-kow\-ski-Bouligand dimension, see \cite{maja} and
\cite[Chapter 3.3]{krantzparks}.

\begin{definition}\label{mbdim}
  Let $S\subseteq \R^n$. For $\epsilon>0$, we define the {\bf
    $\epsilon$-Min\-kow\-ski sausage of $S$} as the set
  $$(S)_\epsilon = \left\{ x \in\R^n \,|\, d(x,S)\leq \epsilon\right\}.$$

  Let $\lambda$ be the Lebesgue measure on $\R^n$ and denote by $A^{n,d}_\epsilon(S)$ the ratio
  $$A^{n,\delta}_\epsilon(S):=\frac{\lambda (S)_\epsilon}{\gamma_{n-\delta}\epsilon^{n-\delta}}.$$

  We say that $d$ is the {\bf upper (lower) Min\-kow\-ski-Bou\-li\-gand dimension of
    $S$}, $\overline{\dim}_{MB}S$ ($\underline\dim_{MB}S$) if
  \begin{align*}
    d&=\inf \left\{\delta\,\Big|\,\lim_{\epsilon\to0} A_\epsilon^{n,\delta} (S)= +\infty\right\}\\
    \bigg(d&=\sup\left\{\delta\,\Big|\,\lim_{\epsilon\to0} A_\epsilon^{n,\delta} (S)= 0\right\}\bigg).
  \end{align*}
\end{definition}

A classical result (eg.\ \cite[Ch.\ I.2]{pesin}, \cite[Prop.\ 
2.4]{falconer}) is that both upper and lower Min\-kow\-ski-Bou\-li\-gand
dimensions are exactly equal to the corresponding upper and lower box
counting dimensions, so we will only use the box-dimension notation
for the discussed dimension.

\begin{definition}
  For $S\subseteq\R^n$, if $\underline\dim_BS=\overline{\dim}_BS=d$,
  we say that $S$ is of box counting dimension $d$, denoting $$\dim_BS=d.$$

  For $S$ of box-dimension $d$, we define the {\bf
    normalised upper (lower) Minkowski content of $S$} as
  \begin{align*}
    \Mdu (S) &= \limsup_{\epsilon\to0} A_\epsilon^{n,d}(S),\\
    \bigg(\Mdl (S) &= \liminf_{\epsilon\to0} A_\epsilon^{n,d}(S))\bigg).
  \end{align*}

  We say that the set $S\subseteq \R^n$ of box-dimension $d$ is
  {\bf Min\-kow\-ski-non-degenerate} if $\Mdu(S) <+\infty$ and
  $\Mdl(S) > 0$ and denote $$\dim_BS\equiv d.$$
\end{definition}

\begin{example}
  As per the earlier discussion in Example \ref{mkocka}, it is easy to
  show that $K_m\subseteq \R^n$ is of normalised Minkowski content
  (both upper and lower) equal to $1$ because
  $$\frac{\lambda(K_m)_\epsilon}{\epsilon^{n-m}}=\gamma_{n-m}+\sum_{k=0}^{m-1}\gamma_{n-k}\left(\frac\epsilon2\right)^{m-k}\!E^{(k)}_m,$$
  where $E_m^{(k)}$ is the number of $k$-edges of the $m$-cube.

  \begin{figure}[t]
    \centering
    \includegraphics[width=0.3\textwidth]{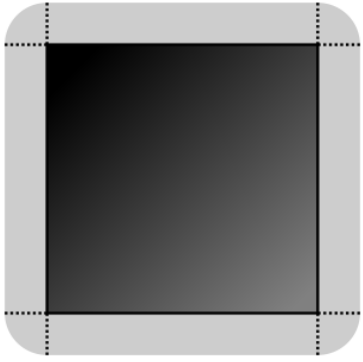}
    \caption{The Minkowski sausage of the unit square in $\R^2$.}
    \label{fig:square}
  \end{figure}

  In Figure \ref{fig:square}, we have $E_2^{(0)}=4$ and $E_2^{(1)}=4$
  and $\gamma_0=1$, $\gamma_1=2$ and $\gamma_2=\pi$, so we have
  $\lambda(K_2)_\epsilon =
  \gamma_0+\frac\epsilon2\gamma_1E_2^{(1)}+\frac{\epsilon^2}4\gamma_2E_2^{(0)}=1+4\epsilon+\pi\epsilon^2$.
\end{example}

\begin{example}\label{ealfa}
  The set of points $E_\alpha=\{k^{-\alpha}\,|\,k\in\N\}$ for $\alpha>0$
  is of box counting dimension $\frac1{1+\alpha}$, with normalised
  Minkowski contents (both upper and lower) equal to
  $$\Md S=\left(\frac2{\alpha\sqrt\pi}\right)^{\frac\alpha{\alpha+1}}\!(\alpha+1)\,\Gamma\left(\frac{\alpha}{2(\alpha+1)}+1\right).$$

  \begin{figure}[ht]
    \centering
    \includegraphics[width=0.45\textwidth]{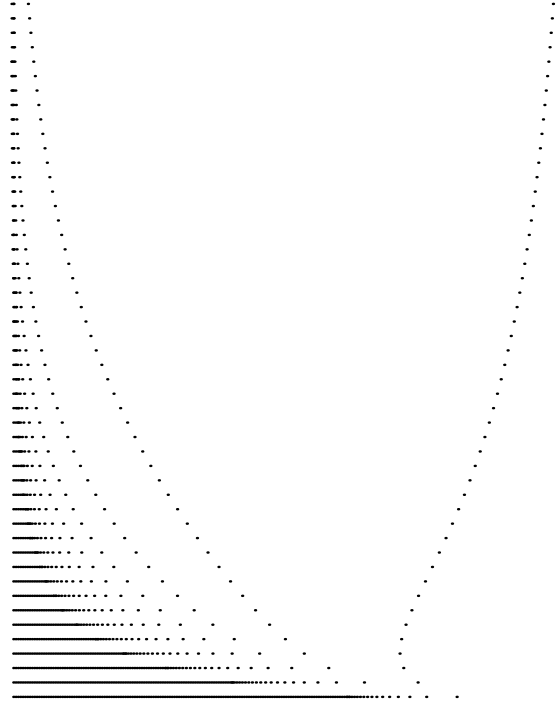}
    \caption{Sets $E_\alpha$ of unit normalised Minkowski contents for
      $\alpha\in\{0.2,0.4,\ldots,5.0\}$.}
    \label{fig:ealpha}
  \end{figure}

  Figure \ref{fig:ealpha} shows sets $E_\alpha$ of unit normalised
  Min\-kow\-ski contents in dimensions ranging from $5/6$ at the bottom to
  $1/6$ in the top row.
\end{example}

\subsection{Fractal analysis of $\alpha$-regular generalised fractal nests}

Intuitively, an inner $\alpha$-regular fractal nest is the subset of
the union of circles of radii $n^{-\alpha}$ where subsets per
individual circle are homothetic to each other. The outer
$\alpha$-regular fractal nest has radii of form $1-(k+1)^{-\alpha}$.

For a set $S\subseteq\R^n$ and $x\geq0$ we denote by $(x)S$ the
scaling of the set $S$ by $x$.

\begin{definition}Let $S\subseteq \Sn1$ be a Borel subset of the
  unit sphere in $\R^n$. We define the {\bf $\alpha$-regular fractal nest of centre (outer) type} as
  \begin{align*}
    \F_\alpha S&=\bigcup_{k=1}^\infty (k^{-\alpha})S\\
    \bigg(\O_\alpha S&=\bigcup_{k=1}^\infty (1-k^{-\alpha})S\bigg).
  \end{align*}
\end{definition}

Note that $$\F_\alpha\{1\}=E_\alpha$$ and
$$\O_\alpha\{1\} = 1-E_\alpha,$$ where for a set $S$ the expression
$1-S$ is defined as $\{1-x\,|\,x\in S\}$, with $E_\alpha$ defined in Example \ref{ealfa}.

\begin{theorem}\label{th:inner}
  Let $S\subseteq \Sn1$ be a Borel subset of the unit hyper-sphere in
  $\R^n$ such that $$\dim_BS\equiv \delta\geq 0.$$ For every
  $\alpha>0$ the $\alpha$-regular fractal nest of centre
  type generated by $S$ has
  $$\dim_B \F_\alpha(S)
  \begin{cases}
    \equiv \frac{\delta+1}{\alpha+1},\hbox{ for } \alpha\delta < 1,\\
    = \delta,\hbox{ for } \alpha\delta=1\\
    \equiv \delta,\hbox{ for } \alpha\delta > 1.
  \end{cases}
  $$
\end{theorem}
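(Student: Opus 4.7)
The plan is to estimate the Minkowski sausage volume $\lambda((\F_\alpha S)_\epsilon)$ directly, using the equivalence of box-counting and Minkowski--Bouligand formulations noted above. Split the index range at a critical index $k_\epsilon \asymp \epsilon^{-1/(\alpha+1)}$, determined from the asymptotic gap $k^{-\alpha}-(k+1)^{-\alpha}\sim\alpha k^{-\alpha-1}$ between consecutive nested spheres: the shells of width $2\epsilon$ around radii $k^{-\alpha}$ are pairwise disjoint precisely when $k\lesssim k_\epsilon$. Note also $k_\epsilon^{-\alpha}\asymp\epsilon^{\alpha/(\alpha+1)}$, the inner scale below which the nested sphere structure is washed out by the sausage.

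For the subcritical contribution $k\leq k_\epsilon$, the $\epsilon$-sausages of distinct $(k^{-\alpha})S$ sit in pairwise disjoint spherical shells, so their volumes add. The scaling identity $\lambda(((k^{-\alpha})S)_\epsilon) = k^{-\alpha n}\lambda((S)_{k^\alpha\epsilon})$ together with the Minkowski non-degeneracy hypothesis $\lambda((S)_\eta)\asymp\eta^{n-\delta}$, valid while $\eta$ stays bounded, yields $\lambda(((k^{-\alpha})S)_\epsilon)\asymp k^{-\alpha\delta}\epsilon^{n-\delta}$. Summing over $k\leq k_\epsilon$ branches into the three cases of the theorem: a polynomial $k_\epsilon^{1-\alpha\delta}$ if $\alpha\delta<1$; a logarithm $\log k_\epsilon$ if $\alpha\delta=1$; and a bounded sum if $\alpha\delta>1$. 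Multiplied by $\epsilon^{n-\delta}$, these give the candidate asymptotics $\epsilon^{n-(\delta+1)/(\alpha+1)}$, $\epsilon^{n-\delta}\log(1/\epsilon)$, and $\epsilon^{n-\delta}$ respectively.

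For the supercritical tail $k>k_\epsilon$, one bounds the remaining union inside the truncated cone $C=\{ts\,|\,t\in[0,k_\epsilon^{-\alpha}],\,s\in S\}$ over $S$. A direct covering argument --- cover $S$ by $\asymp(\epsilon/R)^{-\delta}$ spherical caps of radius $\epsilon/R$ with $R=k_\epsilon^{-\alpha}$, and note each radial ray has $\epsilon$-sausage of volume $\lesssim R\epsilon^{n-1}$ --- gives $\lambda(C_\epsilon)\lesssim R^{\delta+1}\epsilon^{n-\delta-1}\asymp\epsilon^{n-(\delta+1)/(\alpha+1)}$. In case $\alpha\delta<1$ this matches the subcritical exponent, so the two regimes balance; in case $\alpha\delta=1$ it has the same polynomial exponent but without the logarithm, hence is absorbed; in case $\alpha\delta>1$ it is strictly smaller than $\epsilon^{n-\delta}$, since $(\delta+1)/(\alpha+1)<\delta$ is equivalent to $\alpha\delta>1$.

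The matching lower bound is the main obstacle. For $\alpha\delta<1$, restricting the inner sum to a window such as $k_\epsilon/2\leq k\leq k_\epsilon$, where the sausages remain in pairwise disjoint shells, lets disjointness move the sum under $\lambda$, and this partial sum alone already realises the target order $\epsilon^{n-(\delta+1)/(\alpha+1)}$. For $\alpha\delta>1$ the single term $k=1$ contributes $\lambda((S)_\epsilon)\gtrsim\epsilon^{n-\delta}$, which matches the upper bound. The boundary case $\alpha\delta=1$ does not claim non-degeneracy, consistent with the logarithmic factor. Upgrading these matching asymptotics to genuine Minkowski non-degeneracy (the "$\equiv$" in the statement) reduces to the non-degeneracy of $S$ and a careful bookkeeping of overlap near $k=k_\epsilon$; this bookkeeping is where the argument should require the most care.
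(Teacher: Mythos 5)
Your argument is correct, and while it uses the same skeleton as the paper --- split at a critical index $k_\epsilon\asymp\epsilon^{-1/(\alpha+1)}$ (the paper's $m_1$ from its Lemma on $m_1,m_2$), treat the well-separated outer shells by exact additivity of sausage volumes plus the scaling identity and Minkowski non-degeneracy of $S$ --- your handling of the inner ``core'' $\bigcup_{k>k_\epsilon}(k^{-\alpha})S$ is genuinely different. The paper invokes its Dense Covering Lemma to \emph{replace} the crowded radii $k^{-\alpha}$, $k>m_1$, by the equally spaced radii $2k\epsilon$, $k\le m_2$, and then sums those volumes, obtaining a two-sided asymptotic $\lambda(\mo C_\epsilon S)_\epsilon\sim\epsilon^{n-(\delta+1)/(\alpha+1)}$; you instead enclose the core in the truncated cone over $S$ of radius $R=k_\epsilon^{-\alpha}$ and prove only the upper bound $\lambda(C_\epsilon)\lesssim R^{\delta+1}\epsilon^{n-\delta-1}\asymp\epsilon^{n-(\delta+1)/(\alpha+1)}$ by covering $S$ with $\asymp(\epsilon/R)^{-\delta}$ caps and sausaging the corresponding radial segments. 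A one-sided bound suffices because in every case the matching lower bound is already realised by the disjoint tail (the window $k\in[k_\epsilon/2,k_\epsilon]$ when $\alpha\delta<1$, the term $k=1$ when $\alpha\delta>1$), and subadditivity plus monotonicity of $\lambda(\cdot)_\epsilon$ turn these into genuine two-sided estimates, i.e.\ non-degeneracy; so the ``careful bookkeeping'' you defer is in fact immediate. What the paper's route buys is the sharper structural fact that the core \emph{itself} is comparable to $\epsilon^{n-(\delta+1)/(\alpha+1)}$ from below as well, so that core and tail genuinely balance when $\alpha\delta<1$, and the same covering lemma is reused for the outer nests; what your route buys is a more elementary, purely geometric core estimate that bypasses the covering lemma entirely and makes transparent why the exponent $\frac{\delta+1}{\alpha+1}$ is the sausage exponent of a $(\delta+1)$-dimensional cone truncated at the scale $R\asymp\epsilon^{\alpha/(\alpha+1)}$ --- which resonates with the product-like interpretation discussed in the closing remarks.
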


\begin{theorem}\label{th:outer}
  Let $S\subseteq \Sn1$ be a Borel subset of the unit hyper-sphere
  in $\R^n$ such that $$\dim_BS\equiv \delta\geq 0.$$ For every
  $\alpha>0$ that the $\alpha$-regular fractal nest of outer
  type generated by $S$ has
  $$\dim_B \O_\alpha S\equiv \delta+\frac1{\alpha+1}.$$
\end{theorem}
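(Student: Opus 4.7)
The plan is to work in the Minkowski--Bouligand formulation of Definition \ref{mbdim} and establish the two-sided bound
$$\lambda\bigl((\O_\alpha S)_\epsilon\bigr) \asymp \epsilon^{\,n-\delta-1/(\alpha+1)},$$
which simultaneously identifies the dimension as $\delta + \tfrac{1}{\alpha+1}$ and yields the Minkowski non-degeneracy encoded in the symbol $\equiv$. Heuristically, near the accumulation set $\Sn1$ the nest $\O_\alpha S$ locally resembles a Cartesian product of $S$ with the radial sequence $1 - E_\alpha$, so the product-like behaviour mentioned in the introduction predicts $\dim_B = \delta + \dim_B E_\alpha = \delta + \tfrac{1}{\alpha+1}$. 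The geometric input that makes this rigorous is that the radii $r_k = 1 - k^{-\alpha}$ satisfy $r_{k+1} - r_k \asymp \alpha k^{-\alpha-1}$, which gives a critical index $k_\epsilon \asymp \epsilon^{-1/(\alpha+1)}$ at which the radial gap becomes comparable to $2\epsilon$: for $k < k_\epsilon$ the spheres of radii $r_k$ are pairwise separated by more than $2\epsilon$, so the tubes $\bigl((r_k)S\bigr)_\epsilon$ are pairwise disjoint; for $k \geq k_\epsilon$ the tubes merge radially and cover the annulus $\{x \in \R^n : r_{k_\epsilon} \leq |x| \leq 1\}$ at scale $\epsilon$.

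Accordingly, I would split the nest as $\O_\alpha S = A_\epsilon \cup B_\epsilon$, with $A_\epsilon$ the union of the copies with $2 \leq k \leq k_\epsilon$ (the $k=1$ copy is just $\{0\}$ and contributes a sausage of volume of order $\epsilon^n$, which is negligible) and $B_\epsilon$ the union of the remaining copies, and estimate each piece separately. For $A_\epsilon$, the identity $(rS)_\epsilon = r\,(S)_{\epsilon/r}$ combined with $\dim_B S \equiv \delta$ gives $\lambda\bigl(((r_k)S)_\epsilon\bigr) \asymp r_k^{\,\delta}\,\epsilon^{\,n-\delta} \asymp \epsilon^{\,n-\delta}$ uniformly in $k \geq 2$, since $r_k \in [1-2^{-\alpha},1)$; summing over the $\asymp k_\epsilon$ pairwise disjoint tubes gives
$$\lambda\bigl((A_\epsilon)_\epsilon\bigr) \asymp k_\epsilon\,\epsilon^{\,n-\delta} \asymp \epsilon^{\,n-\delta-1/(\alpha+1)}.$$
For $B_\epsilon$, $(B_\epsilon)_\epsilon$ is comparable, up to constants, to the product of a radial interval of width $\asymp k_\epsilon^{-\alpha} \asymp \epsilon^{\alpha/(\alpha+1)}$ with the tangential $\epsilon$-neighbourhood of $S$ inside $\Sn1$, whose $(n-1)$-measure is $\asymp \epsilon^{(n-1)-\delta}$ by the non-degeneracy hypothesis; the product of these two factors is again of order $\epsilon^{\,n-\delta-1/(\alpha+1)}$.

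The main technical obstacle is the lower bound for $(B_\epsilon)_\epsilon$: one must verify that the radial $2\epsilon$-density of the spheres for $k \geq k_\epsilon$ (immediate from the definition of $k_\epsilon$), together with the small tangential distortion caused by scaling factors $r_k \to 1$, really forces $(B_\epsilon)_\epsilon$ to cover a positive fraction of the annular product described above rather than leaving radial or angular gaps. This is precisely the place where the hypothesis $\dim_B S \equiv \delta$ is used rather than the weaker $\dim_B S = \delta$: uniform positivity of $\Mdl(S)$ guarantees that the tangential $\epsilon$-neighbourhoods of the individual scaled copies glue together into a cover of the tangential $\epsilon$-neighbourhood of $S$ itself with uniformly positive density, which closes the argument.
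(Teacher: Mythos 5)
Your overall strategy coincides with the paper's: split $\O_\alpha S$ at the critical index $k_\epsilon\sim\epsilon^{-1/(\alpha+1)}$ (the paper's $m_1$ from Lemma \ref{emoneemtwo}) into a tail of $\epsilon$-isolated copies and a core of merged copies. Your tail estimate --- roughly $k_\epsilon$ pairwise disjoint tubes, each of volume $r_k^n\lambda(S)_{\epsilon/r_k}\asymp\epsilon^{n-\delta}$ by the scaling identity of Lemma \ref{geomlemma} and non-degeneracy --- is exactly the computation at steps (\ref{on:c1})--(\ref{on:c2}). Where you diverge is the core. The paper replaces the radii $1-k^{-\alpha}$, $k>m_1$, which are $2\epsilon$-dense in an annulus of width $m_1^{-\alpha}\asymp\epsilon^{\alpha/(\alpha+1)}$, by the $2\epsilon$-spaced progression $1-2k\epsilon$, $0\le k\le m_2$, via the dense covering lemma (Lemma \ref{coverlemma}); since the $\epsilon$-sausages of concentric copies at radii $2\epsilon$ apart are disjoint up to measure zero, the measure of the replacement is an exact sum of $m_2\sim\epsilon^{-1/(\alpha+1)}$ terms, each $\asymp\epsilon^{n-\delta}$, and the lemma transfers this two-sided bound back to the true core at the cost of a factor $2^n$. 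You instead compare $(B_\epsilon)_\epsilon$ with a product region (radial interval of width $\epsilon^{\alpha/(\alpha+1)}$ times the tangential $\epsilon$-neighbourhood of $S$); this gives the same exponent and is adequate for the upper bound.

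The soft spot is the one you flag yourself: the lower bound for the core. The mechanism you propose --- that $\Mdl(S)>0$ makes the tangential neighbourhoods of the individual scaled copies ``glue together with uniformly positive density'' --- is not the right one: positivity of the lower Minkowski content of $S$ controls the volume of each copy's own sausage, but says nothing about how the angular neighbourhoods of distinct copies overlap, so the claimed positive-density cover of the product region does not follow from it. The efficient fix is radial rather than tangential: from the radii $\{1-k^{-\alpha}\,|\,k>k_\epsilon\}$ extract a maximal $2\epsilon$-separated subfamily; by $2\epsilon$-density it has $\asymp\epsilon^{-1/(\alpha+1)}$ members, their $\epsilon$-sausages are pairwise disjoint up to measure zero (a point within $\epsilon$ of $(r)S$ has modulus in $[r-\epsilon,r+\epsilon]$), and each has volume bounded below by a constant times $\epsilon^{n-\delta}$ by non-degeneracy and Lemma \ref{geomlemma}. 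Summing gives the missing lower bound $\epsilon^{n-\delta-1/(\alpha+1)}$, and this selection-plus-disjointness device is precisely what Lemma \ref{coverlemma} packages. With that substitution your argument closes and becomes essentially the paper's proof.
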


The proofs of both theorems rely on the well-known technique of
separating the ``core'' and the ``tail'' of the set, the ``tail'' part
consisting of the well-isolated components, and the ``core'' of the
remaining parts.

We take a novel approach to analysing the ``core'', where we use the
covering lemma (Lemma \ref{coverlemma}) to replace the components of
the core with well-spaced sets without losing the asymptotic and hence
dimensional properties, including Minkowski (non)-degeneracy.

\section{Application of the generalised nest formulas}

In this section, we show some applications of our main results to some
known and some novel fractal sets.

\subsection{Mapping $m$-cubes to $m$-spheres}

Let $K_m\subset\R^n$ be a unit $m$-cube as defined in Example
\ref{mkocka} with $m<n$. It is easy to show that for limited domains
such as a unit $m$-cube, the mapping
\begin{align*}
  \Phi_{n-1}\colon(x_1,\ldots,x_{n-1})\mapsto(&\cos x_1,\sin x_1\cos x_2,\ldots,\\
                                              &\sin x_1\sin x_2\cdots\sin x_{n-1})
\end{align*} is bi-Lipschitz.

Thus, any subset of the unit hyper-cube can be mapped to a corresponding set
of equal dimension on the hyper-sphere. If we identify $K_m$ with its
embedding into $\Sn1$, applying Theorem \ref{th:inner} we have
that $$\dim_B\F_\alpha K_m = \frac{m+1}{\alpha+1}.$$ We note that this
formula also applies to $m=0$. Also, for the outer nests, using
Theorem \ref{th:outer} we have
$$\dim_B\O_\alpha K_m = m+\frac{1}{\alpha+1},$$ again, compatible with $m=0$.

\subsection{$(\alpha,\beta)$-bi-fractals}

Let $\alpha, \beta>0$. We can identify $E_\beta$ with its image on the
unit circle of $\Phi_1$. Let $D_\beta$ be the union
$$D_\beta=\Phi_1\left(\frac\pi4(1- E_\beta)\right)\cup\Phi_1\left(\frac\pi4 (1+E_\beta)\right)\subseteq \S1.$$

We have that $$\dim_BD_\beta = \frac1{1+\beta}.$$ We call the
set $\F_\alpha D_\beta$ an \emph{$(\alpha,\beta)$-bi-fractal}. Its dimension
is given by Theorem \ref{th:inner} as
$$\dim_B\F_\alpha D_\beta = \frac{\beta+2}{(\beta+1)(\alpha+1)}.$$

\begin{figure}[t]
  \centering
  \parindent0pt
  \begin{subfigure}[b]{0.24\textwidth}
    \includegraphics[width=\textwidth]{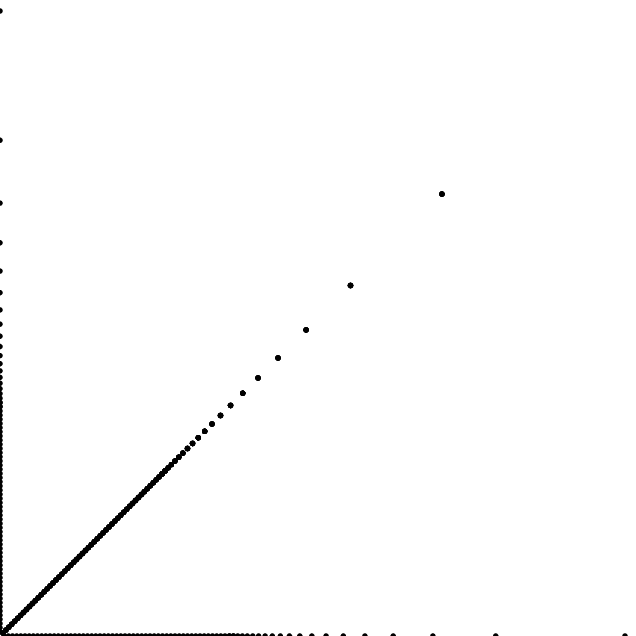}
    \caption{$\alpha\approx0.35$}
  \end{subfigure}
  \begin{subfigure}[b]{0.24\textwidth}
    \includegraphics[width=\textwidth]{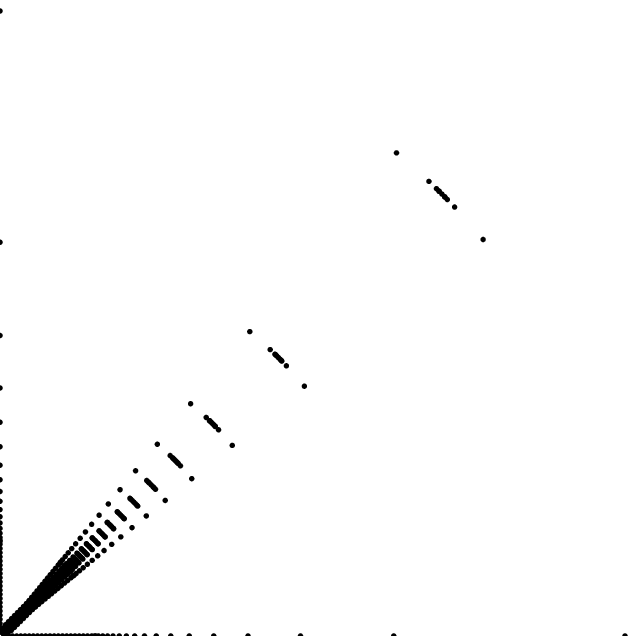}
    \caption{$\alpha\approx0.67$}
  \end{subfigure}\\
  \begin{subfigure}[b]{0.24\textwidth}
    \includegraphics[width=\textwidth]{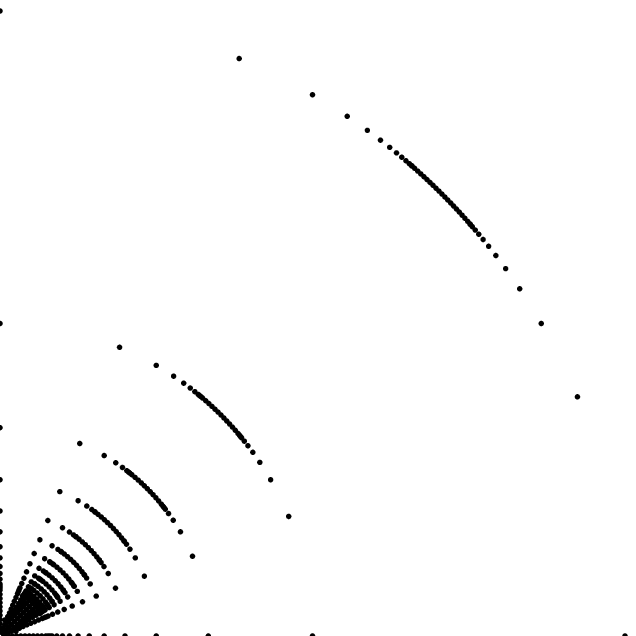}
    \caption{$\alpha\approx1.00$ }
  \end{subfigure}
  \begin{subfigure}[b]{0.24\textwidth}
    \includegraphics[width=\textwidth]{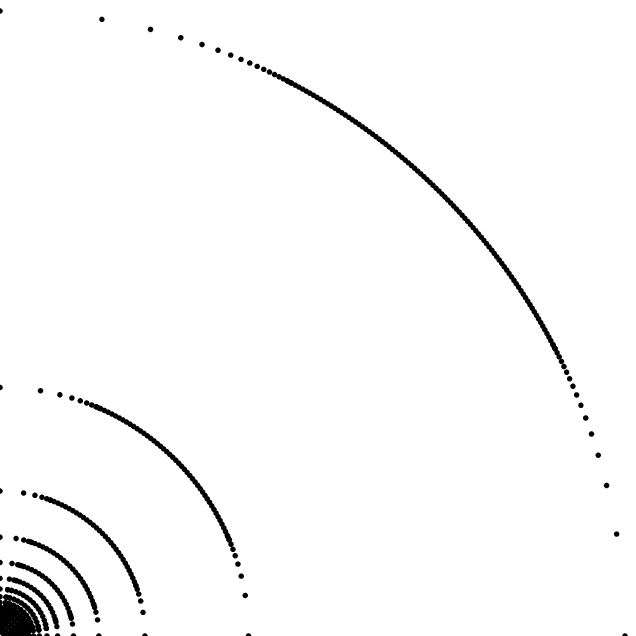}
    \caption{$\alpha\approx1.33$}
  \end{subfigure}
  \caption{$(\alpha,\beta)$-bi-fractals of dimension $3/4$}
  \label{fig:BF}
\end{figure}
\begin{figure}[t]
  \centering
  \parindent0pt
  \begin{subfigure}[b]{0.24\textwidth}
    \includegraphics[width=\textwidth]{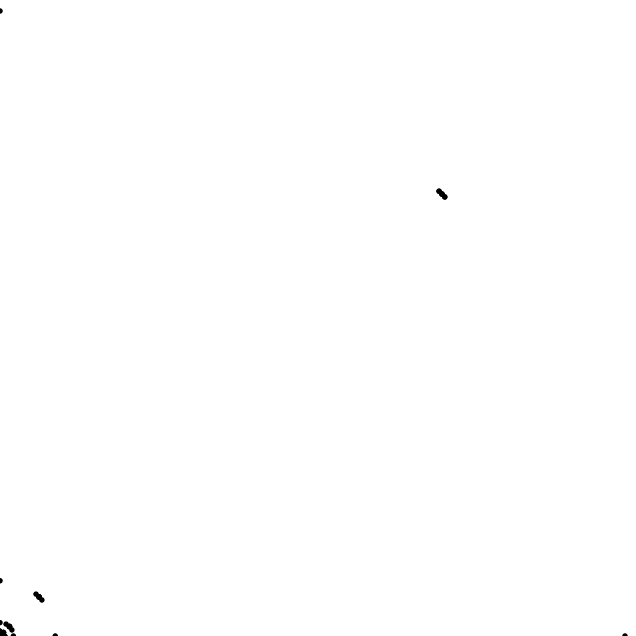}
    \caption{$\dim_B\F_\alpha D_\beta\approx 0.25$}
  \end{subfigure}
  \begin{subfigure}[b]{0.24\textwidth}
    \includegraphics[width=\textwidth]{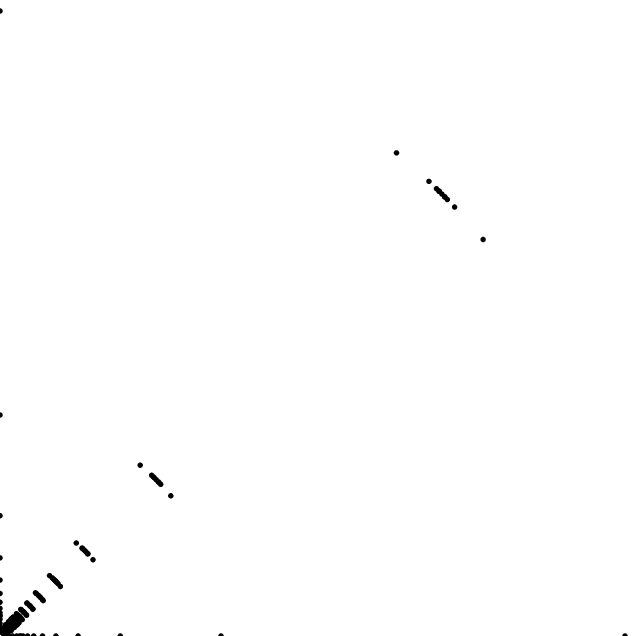}
    \caption{$\dim_B\F_\alpha D_\beta\approx 0.5$}
  \end{subfigure}\\
  \begin{subfigure}[b]{0.24\textwidth}
    \includegraphics[width=\textwidth]{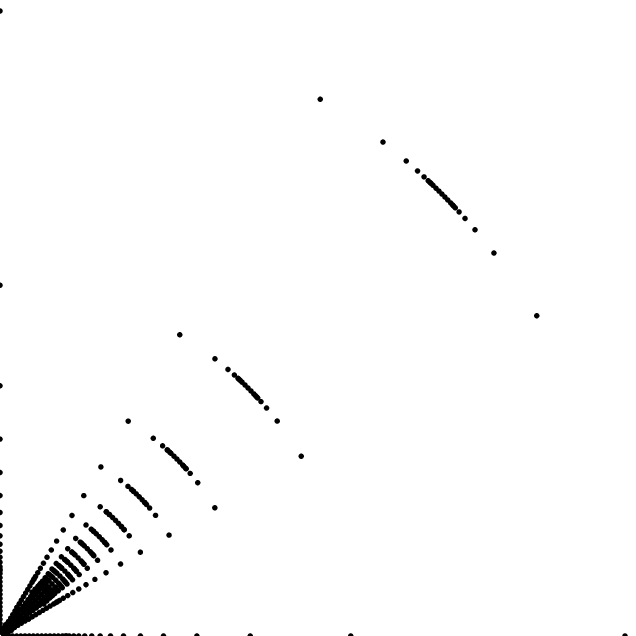}
    \caption{$\dim_B\F_\alpha D_\beta\approx 0.75$}
  \end{subfigure}
  \begin{subfigure}[b]{0.24\textwidth}
    \includegraphics[width=\textwidth]{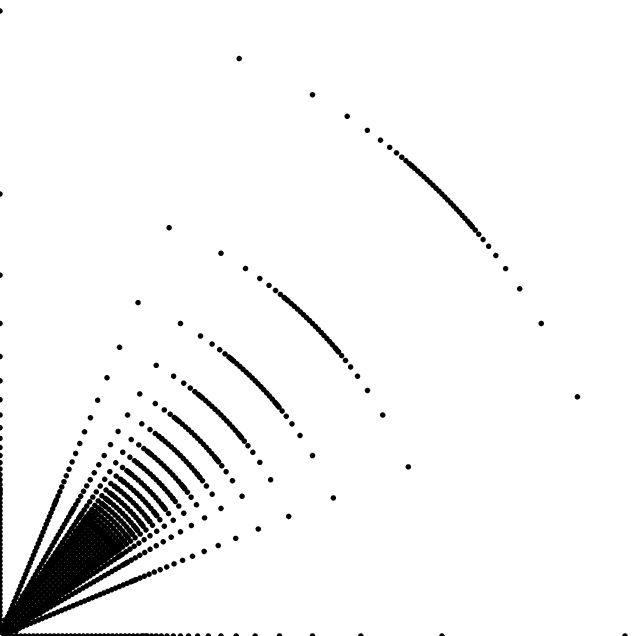}
    \caption{$\dim_B\F_\alpha D_\beta\approx 1.00$}
  \end{subfigure}
  \caption{$(\alpha,\beta)$-bi-fractals of various dimensions.}
  \label{fig:BV}
\end{figure}

\subsection{Uniform Cantor nests}

Intuitively, the uniform Cantor set $\C_N^C$ are Cantor sets
``preserving'' $N$ copies of themselves totalling $C$ in relative
length in each iteration.

In \cite[pg.\ 71]{falconer} uniform Cantor sets are defined in terms
of the number of preserved copies $m$ and the relative gap $r$ (see
\ref{fig:cantor}). Modifying that definition to our notation, uniform
Cantor sets are defined as follows.

\begin{definition}[Uniform Cantor set, \cite{falconer}]
  Let $N\geq 2$ be an integer and $0<r<\frac1N$. We define the set
  $\C_n^r$ as the set obtained by the construction in which each basic
  interval $I$ is replaced by $N$ equally spaced sub-intervals of
  lengths $r|I|$, the ends of $I$ coinciding with the ends of the
  extreme sub-intervals. The starting interval for $\C_N^r$ is $[0,1]$.
\end{definition}

\begin{figure}[ht]
  \centering
  \includegraphics[width=0.45\textwidth]{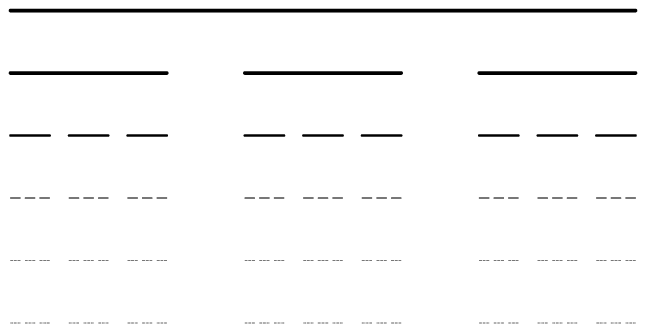}
  \caption{First four iterations of the uniform Cantor set  $\C_3^{1/4}$.}
  \label{fig:cantor}
\end{figure}
For the standard Cantor set, $\C_2^{\frac13}$ we
have, as is shown in \cite{lapidus:cantor} and \cite{jiangchen},
$$\dim_B\C_2^{\frac13}\equiv \frac{\log2}{\log3},$$
with different upper and lower Minkowski contents,
$$\Mdu\C_2^{\frac13}=\gamma^{-1}_{1-\log_32}4\cdot2^{-\log_32}\approx2.27,$$
and
$$\Mdl\C_2^{\frac13}=2\gamma^{-1}_{1-\log_32}\log_{\frac32}3\left(\log_4\frac 32\right)^{\log_32}\!\approx 2.19.$$

An older proof of the following proposition can be found in
\cite{jiangchen}, where the authors use the simpler formula for (both
upper and lower) Minkowski contents omitting the normalising factor.

\begin{proposition}
  The set $\C_N^r\subseteq\R$ is Minkowski non-degenerate with box
  counting dimension $$\dim_B\C_N^r\equiv -\frac{\log N}{\log r}=d.$$
  The upper Minkowski content at the dimension $d$ is
  $$\Mdu\C_N^r=2N\left(\frac s2\right)^d\frac{1-r}{1-Nr}\gamma_{1-d}^{-1}$$
  and the corresponding lower Minkowski content is
  $$\Mdl\C_N^r=\frac2{1-d}\left(\frac{1-d}{2d}\right)^d\gamma_{1-d}^{-1}.$$
\end{proposition}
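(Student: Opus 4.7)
The plan is to exploit the self-similar gap structure of $\C_N^r$ to reduce the volume of the Minkowski sausage to a log-periodic function of $\epsilon$, and then to extract $\Mdu$ and $\Mdl$ from the extrema of that function over one period. At construction stage $k\geq 1$ the set $\C_N^r$ is a union of $N^k$ closed intervals of length $r^k$, and the smallest gaps present at that stage---those created in the last splitting step---have common length $sr^{k-1}$, where $s:=(1-Nr)/(N-1)$ is the length of the unique stage-$1$ gap. The transitional scales of the Minkowski sausage are therefore $\epsilon_k:=sr^{k-1}/2$. For $\epsilon\in[\epsilon_k,\epsilon_{k-1})$ the $\epsilon$-sausage bridges every level-$\geq k$ gap but leaves the $N^{k-1}$ level-$(k-1)$ intervals disjoint (the smallest gap separating two of them has length $sr^{k-2}>2\epsilon$), so
\begin{equation*}
  \lambda\bigl((\C_N^r)_\epsilon\bigr)=N^{k-1}\bigl(r^{k-1}+2\epsilon\bigr).
\end{equation*}

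Substituting $N=r^{-d}$ and setting $t:=\epsilon/r^{k-1}\in[s/2,\,s/(2r))$, the normalised ratio of Definition \ref{mbdim} collapses to
\begin{equation*}
  A^{1,d}_\epsilon(\C_N^r)=\gamma_{1-d}^{-1}\bigl(t^{d-1}+2t^{d}\bigr)=:\gamma_{1-d}^{-1}g(t),
\end{equation*}
which is log-periodic in $\epsilon$ (multiplicative period $r$) and bounded between two positive constants. This immediately yields $\dim_B\C_N^r\equiv d$ and Minkowski non-degeneracy. The lower content is the minimum of $g$ on $[s/2,s/(2r))$. A single derivative gives the unique stationary point $t^*=(1-d)/(2d)$; using $1+2t^*=1/d$ one obtains $g(t^*)=t^{*\,d-1}/d=2^{1-d}d^{-d}(1-d)^{d-1}$, which after dividing by $\gamma_{1-d}$ and regrouping matches the stated $\Mdl$.

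For $\Mdu$, the maximum of $g$ on $[s/2,s/(2r))$ sits at the left endpoint $t=s/2$; the value at the right endpoint agrees with it by the continuity of $\epsilon\mapsto\lambda((\C_N^r)_\epsilon)$ at the level transition, so this is truly the supremum over any one period. Direct substitution gives $g(s/2)=2(s/2)^d(1+s)/s$, and the identities $1+s=N(1-r)/(N-1)$ and $s(N-1)=1-Nr$ turn this into $\Mdu=2N(s/2)^d(1-r)/\bigl((1-Nr)\gamma_{1-d}\bigr)$. The main technical point I expect to confront is verifying that $t^*$ actually lies in the interior of $[s/2,s/(2r))$, so that $g$ takes its minimum there rather than on the boundary; via $Nr^d=1$ this reduces to the pair of inequalities $sd\leq 1-d$ and $r(1-d)<sd$, both of which follow from the concavity of $r\mapsto r^d$ on $(0,1]$ (specifically $r^d\leq 1-d(1-r)$) and are strict for $r<1$. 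Everything else is routine algebra translating between the parameter pairs $(N,r)$ and $(d,s)$.
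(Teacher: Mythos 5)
Your argument is correct. Note, however, that the paper does not actually prove this proposition: it only cites an older proof (Jiang--Chen) and remarks that the difference lies in the normalising factor $\gamma_{1-d}^{-1}$, so there is nothing in the text to compare against line by line; your computation is the standard direct one and I checked it independently. The key identities all work out: on $\epsilon\in[sr^{k-1}/2,\,sr^{k-2}/2)$ the sausage measure is indeed $N^{k-1}(r^{k-1}+2\epsilon)$ because the surviving stage-$(k-1)$ intervals are separated by gaps of length at least $sr^{k-2}>2\epsilon$; the substitution $N=r^{-d}$, $t=\epsilon/r^{k-1}$ gives $A^{1,d}_\epsilon=\gamma_{1-d}^{-1}\bigl(t^{d-1}+2t^{d}\bigr)$; the equality of the two endpoint values reduces to $N(r+s)=1+s$, which is exactly $Nr+(N-1)s=1$; and $g(t^\ast)$ with $t^\ast=(1-d)/(2d)$ regroups to the stated $\Mdl$. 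Your reduction of the interiority of $t^\ast$ to $sd\leq 1-d$ and $r(1-d)<sd$ is also right: after clearing denominators the first becomes $r^{d}\leq 1-d(1-r)$ and the second becomes $r^{1-d}<1-(1-d)(1-r)$, i.e.\ the tangent-line (Bernoulli) inequality applied with exponents $d$ and $1-d$ respectively --- so strictly speaking you need concavity of both $r\mapsto r^{d}$ and $r\mapsto r^{1-d}$, not just the former, but this is the same elementary fact. One incidental benefit of your write-up is that it pins down the quantity $s=(1-Nr)/(N-1)$ (the length of a first-stage gap), which appears undefined in the proposition as stated in the paper.
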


Again, we can use $\Phi_1$ to identify $\C_N^r$ with its image on
$S^1$, and so we have
$$\dim_B\F_\alpha\C_N^r = \frac{1-\log_rN}{1+\alpha}$$ and
$$\dim_B\O_\alpha\C_N^r = \frac{1}{1+\alpha}-\log_rN.$$

\begin{figure}[t]
  \centering
  \parindent0pt
  \begin{subfigure}[b]{0.24\textwidth}
    \includegraphics[width=\textwidth]{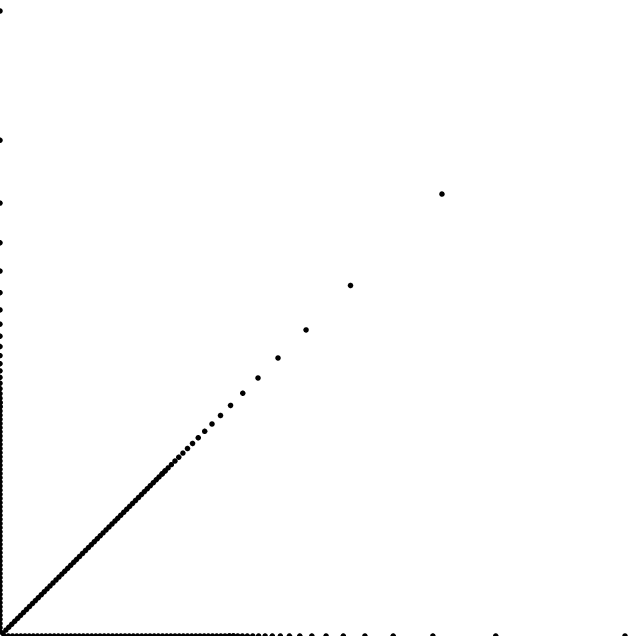}
    \caption{$\alpha\approx0.33$}
  \end{subfigure}
  \begin{subfigure}[b]{0.24\textwidth}
    \includegraphics[width=\textwidth]{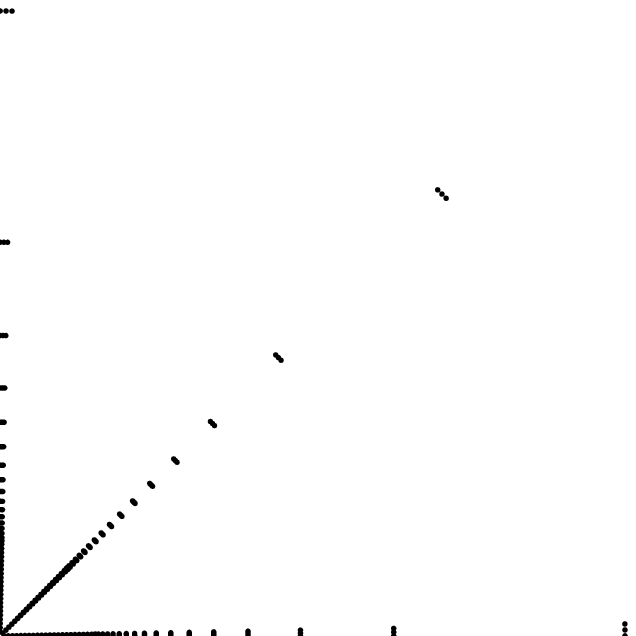}
    \caption{$\alpha\approx0.67$}
  \end{subfigure}
  \begin{subfigure}[b]{0.24\textwidth}
    \includegraphics[width=\textwidth]{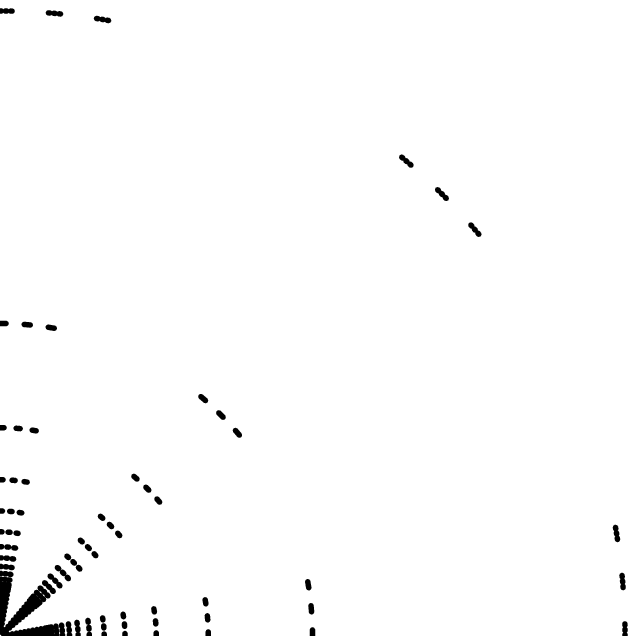}
    \caption{$\alpha\approx1.00$}
  \end{subfigure}
  \begin{subfigure}[b]{0.24\textwidth}
    \includegraphics[width=\textwidth]{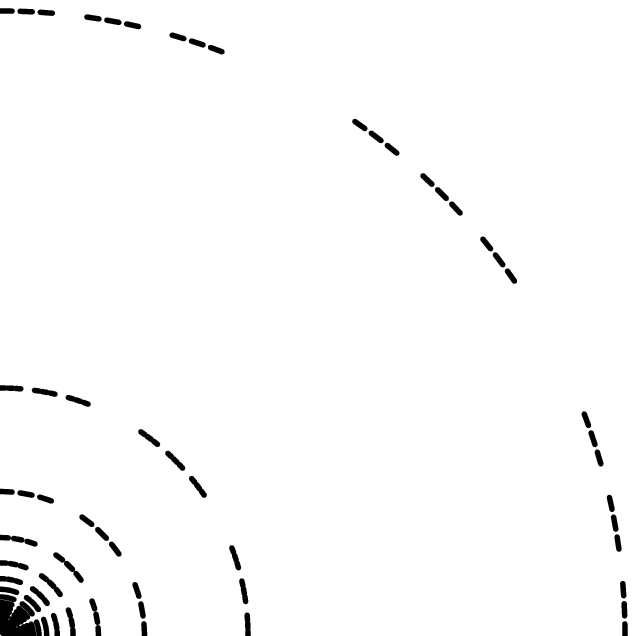}
    \caption{$\alpha\approx1.33$}
  \end{subfigure}
  \caption{$\F_\alpha\C_3^r$ nests of dimension $3/4$.}
  \label{fig:CF}
\end{figure}

\begin{figure}[t]
  \centering
  \parindent0pt
  \begin{subfigure}[b]{0.24\textwidth}
    \includegraphics[width=\textwidth]{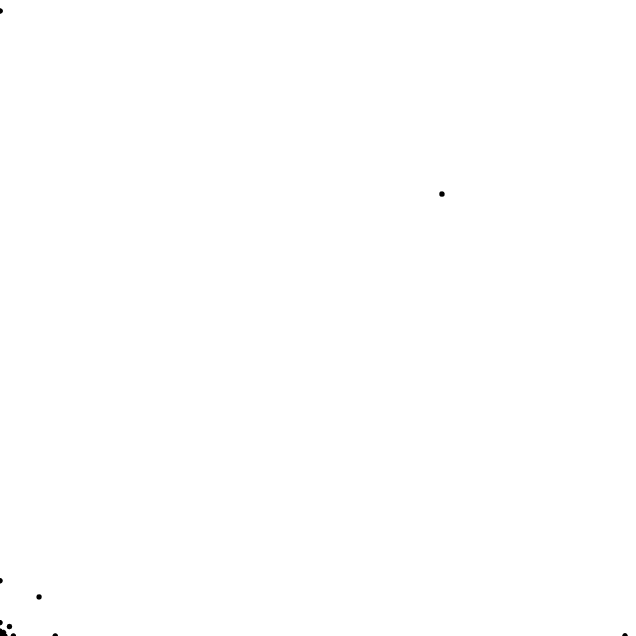}
    \caption{$\dim_B\F_\alpha\C_3^r\approx 0.25$}
  \end{subfigure}
  \begin{subfigure}[b]{0.24\textwidth}
    \includegraphics[width=\textwidth]{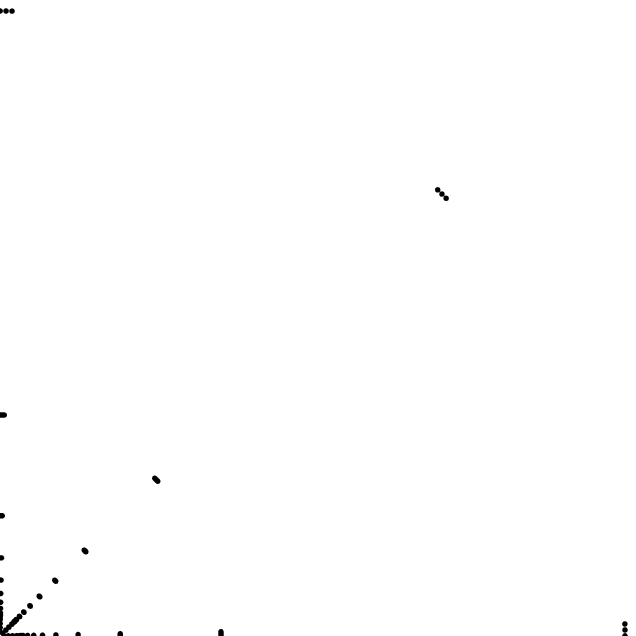}
    \caption{$\dim_B\F_\alpha\C_3^r\approx 0.50$}
  \end{subfigure}
  \begin{subfigure}[b]{0.24\textwidth}
    \includegraphics[width=\textwidth]{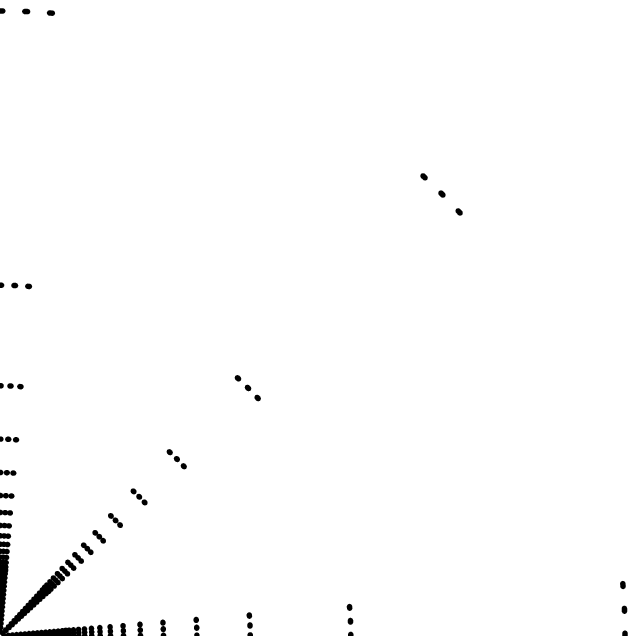}
    \caption{$\dim_B\F_\alpha\C_3^r\approx 0.75$}
  \end{subfigure}
  \begin{subfigure}[b]{0.24\textwidth}
    \includegraphics[width=\textwidth]{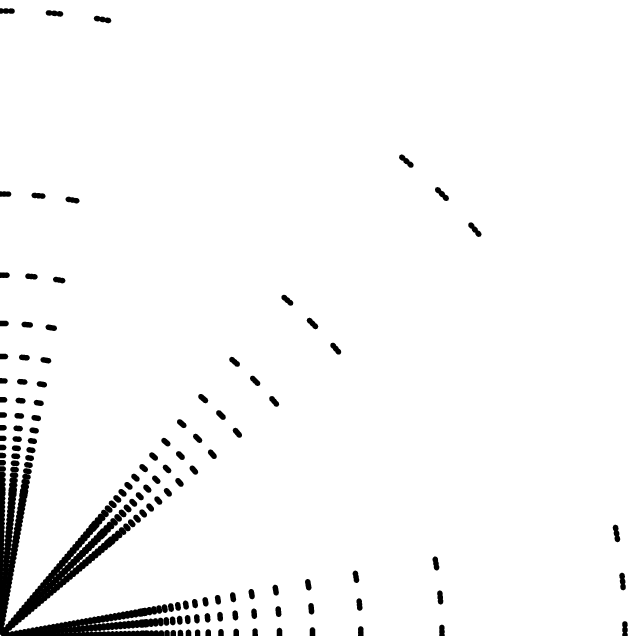}
    \caption{$\dim_B\F_\alpha\C_3^r\approx 1.00$}
  \end{subfigure}
  \caption{$\F_\alpha\C_3^r$ nests.}
  \label{fig:CV}
\end{figure}

\subsection{Numerical verification of the results}

Since the main results of this article are asymptotic in nature, there
is always a risk that we will not be able to reproduce such results
in numerical computations. Luckily, as can be seen in figures
\ref{fig:experiment-fixed} and \ref{fig:experiment-variable} we can
observe the dimensions to a reasonable accuracy.

We used the same algorithm for producing figures \ref{fig:BF},
\ref{fig:BV}, \ref{fig:CF} and \ref{fig:CV} and also for the explicit
computation of the dimensions.

We use the same technique used in the proofs of the main results, in
particular we use of Lemma \ref{emoneemtwo} for producing numbers
$m_1(\epsilon)$ (the number of isolated components, the ``tail'' of
the fractal) and $m_2(\epsilon)$, the number of $2\epsilon$-separated
elements that cover the ``core'' of the fractal nest.

For figures \ref{fig:BF}, \ref{fig:BV}, \ref{fig:CF} and \ref{fig:CV}
we used a program written in the Python programming language (version
3.6) that outputs an encapsulated PostScript (eps) description of the
fractal. PostScript is well-suited as a page description language
since it allows for global scaling and setting of line width using the
{\tt setlinewidth} command that is defined in the standard as ``up to
two pixels'' \cite[pg.\ 674]{postscript} best approximation of the
Minkowski sausage of half radius of the given parameter when rendered.

The figures themselves have the half of the line width parameter
$\epsilon$ set at $1/300$ of the height and width of the picture.

For $(\alpha,\beta)$-bi-fractals, we first obtain the radius of the
element of the nest using $r^{(1)}_k=k^{-\alpha}$ for
$k\in\{1,\ldots, m_1(\epsilon)\}$ and $r^{(2)}_k=2k\epsilon$ for
$k\in\{1,\ldots, m_2(\epsilon)\}$. Then, at each radius $r_k^{(i)}$ we
draw the the set $D_\beta$ by the same construction, using
$m_1\big(\frac{4\epsilon}{\pi r_k^{(i)}}\big)$ and
$m_2\big(\frac{4\epsilon}{\pi r_k^{(i)}}\big)$, both with respect
to the exponent $-\beta$ instead of $-\alpha$.

For uniform Cantor nests, we repeat the same initial part to obtain
$r_k^{(i)}$, we use a standard recursive algorithm for describing
segments of the Cantor set, where the number of iterations depends on
the radius of the nest element, since we are interested only in
segments that have gaps larger than $2\epsilon/r_k^{(i)}$.

In Figures \ref{fig:BF} and \ref{fig:CF} we show
$(\alpha, \beta)$-bi-fractals $\F_\alpha D_\beta$ and uniform Cantor
nests $\F_\alpha\C_3^r$ of fixed dimension $d=3/4$ and various
$\alpha$ with $\beta$ and $r$ computed from
\begin{align}
  r &= N^{-\frac{1}{\delta}}\label{r:num}\\
  \beta &= \frac1\delta - 1\label{beta:num}\\
  \delta &= d\alpha + d -1.\label{delta:num}
\end{align}

The constraint of the main result that $\alpha$ plays a role in the
total dimension only if $\alpha\delta<1$ limits $\alpha$ to
\begin{align}\alpha\in\left\langle{ \frac1d-1,
      \frac1d}\right\rangle\label{alphainterval}\end{align}

In Figures \ref{fig:BV} and \ref{fig:CV} we vary the total dimension
$d\in\langle\frac14,1\rangle$ and we
set \begin{align}\alpha=\frac1d-\frac12 \label{varying-alpha}\end{align}
so that $\alpha$ is always in the centre of the interval given in
(\ref{alphainterval}). Then we compute $\delta$ using
(\ref{delta:num}), and finally we produce the parameters $\beta$ and
$r$ from (\ref{beta:num}) and (\ref{r:num}).


\begin{figure*}[ht]
  \centering
  \includegraphics[width=\textwidth]{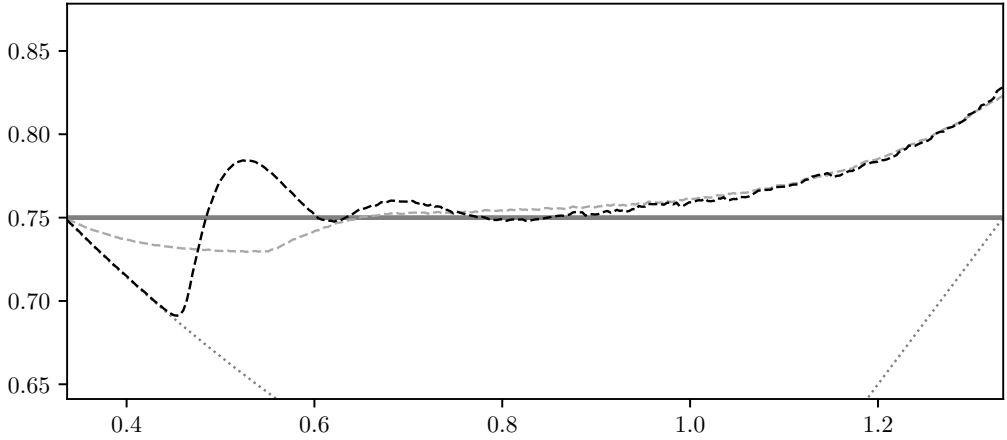}
  \caption{Relative error for numerical computation of the dimensions of
    $\F_\alpha D_\beta$ (grey dashed line) and $\F_\alpha\C_3^r$
    (black dashed line) for fixed dimension $3/4$ (bold horizontal line).}
  \label{fig:experiment-fixed}
\end{figure*}

For Figure \ref{fig:experiment-fixed} we fix the dimension of the whole
fractal nest to be $3/4$ and we vary the parameter $\alpha$.

We compute the parameters $\beta$ for the $(\alpha,\beta)$-bi-fractal
and $r$ for the $\F_\alpha\C_3^r$ Cantor nest. For ten different
values of $\epsilon$ ranging from $2^{-25}$ to $2^{-10}$ we count the
number $N_\epsilon$ of points necessary to draw the fractals. Using
Python's {\tt scikit-learn} library \cite{scikit}, we find the slope
for the linear regression of $\ln N_\epsilon$ against $-\ln\epsilon$.

As can be seen in Figure \ref{fig:experiment-fixed}, the results of
these computations come mostly within 10\% precision, with relatively
short execution time, finishing within minutes on a laptop
computer. The falling dotted line at the left side of Figure
\ref{fig:experiment-fixed}
represents $$\dim_BE_\alpha=\frac1{\alpha+1}$$ and the rising dotted
line on the right side is $\delta$, the dimension of the set copied by
the nest. The Cantor nests are sensitive in the beginning, following
the $(\alpha+1)^{-1}$ curve. On the right-hand side, as we approach
the Minkowski degenerate point when $\alpha=\delta^{-1}$ we should
expect the error to grow, as it does.


\begin{figure*}[ht]
  \centering
  \includegraphics[width=\textwidth]{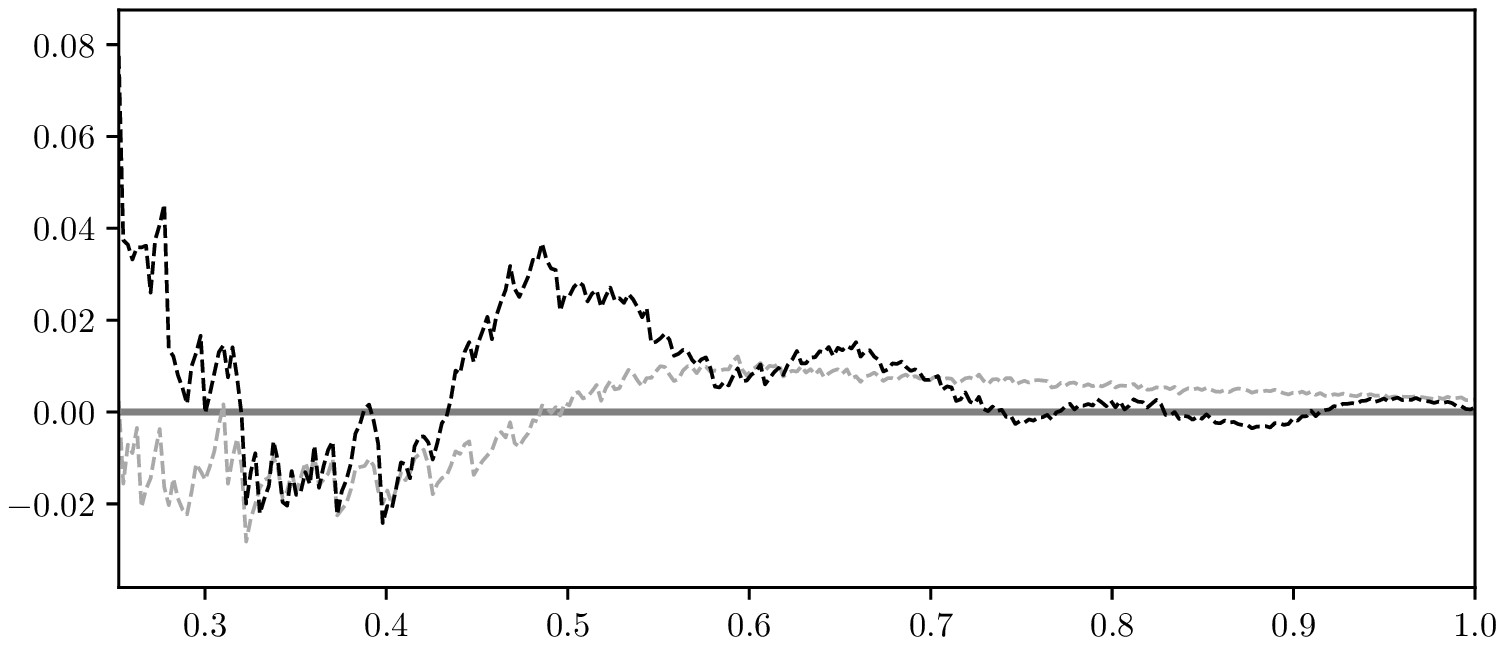}
  \caption{Numerical computation of the dimensions of
    $\F_\alpha D_\beta$ (grey dashed line) and $\F_\alpha\C_3^r$ (black dashed line)
    for varying dimensions.}
  \label{fig:experiment-variable}
\end{figure*}

In Figure \ref{fig:experiment-variable}, we display the relative error
of the same method against a varying total box counting dimension,
$d\in\left\langle0.25, 1\right\rangle$ as in figures \ref{fig:BV} and
\ref{fig:CV}, for $\alpha$ defined by (\ref{varying-alpha}).

Since $\alpha$ here is defined to be quite distant from $\delta^{-1}$
with $$\alpha\delta = \frac12 - \frac d4,$$ we expect the error to be
relatively low, with Figure \ref{fig:experiment-variable} showing the
error under 5\% for both types of nests.


\begin{figure*}[ht]
  \centering
  \includegraphics[width=\textwidth]{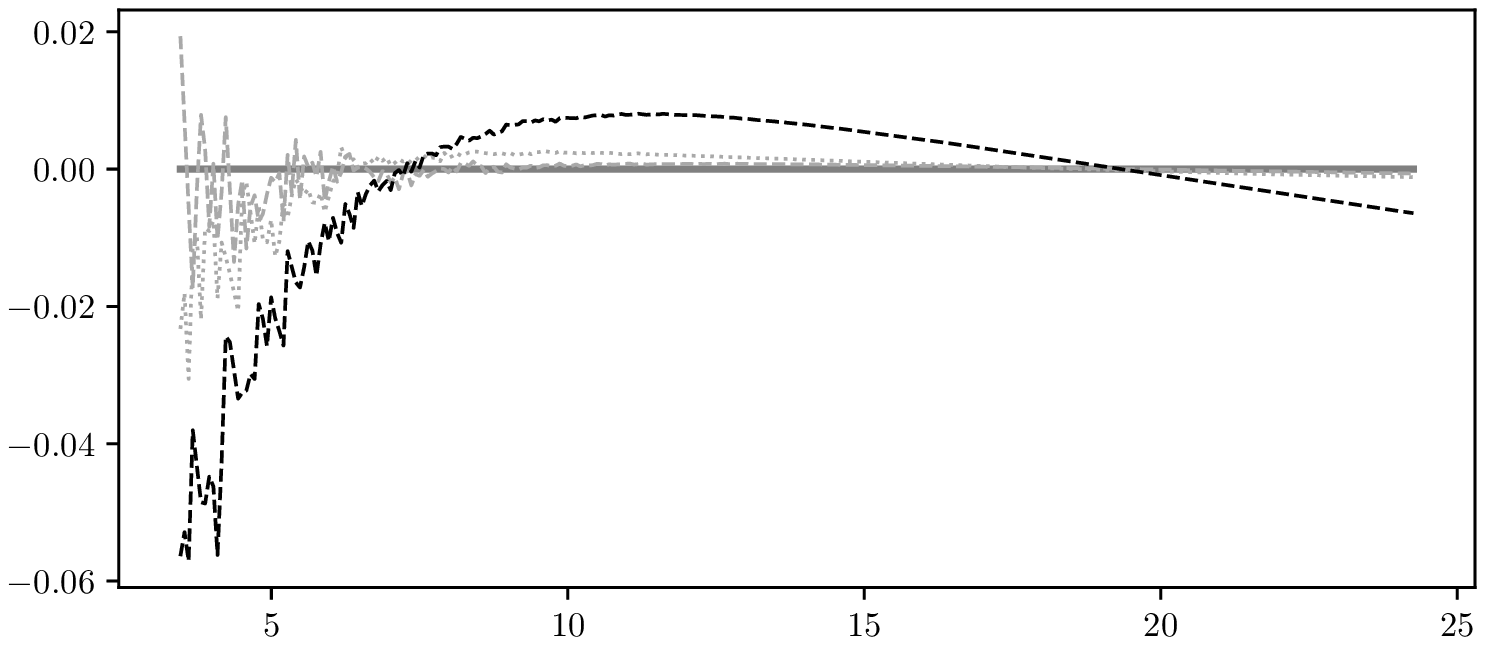}
  \caption{Relative deviations of $\ln N_\epsilon$ from linear
    regression with respect to $-\ln\epsilon$ for
    $(\alpha,\beta)$-bi-fractals of dimension $3/4$ with $\alpha=4/5$
    (dashed grey line), $\alpha=4/3$ (dashed black line) and
    $\alpha=3$ (dotted grey line).}
  \label{fig:experiment-individual-bifractal}
\end{figure*}


\begin{figure*}[ht]
  \centering
  \includegraphics[width=\textwidth]{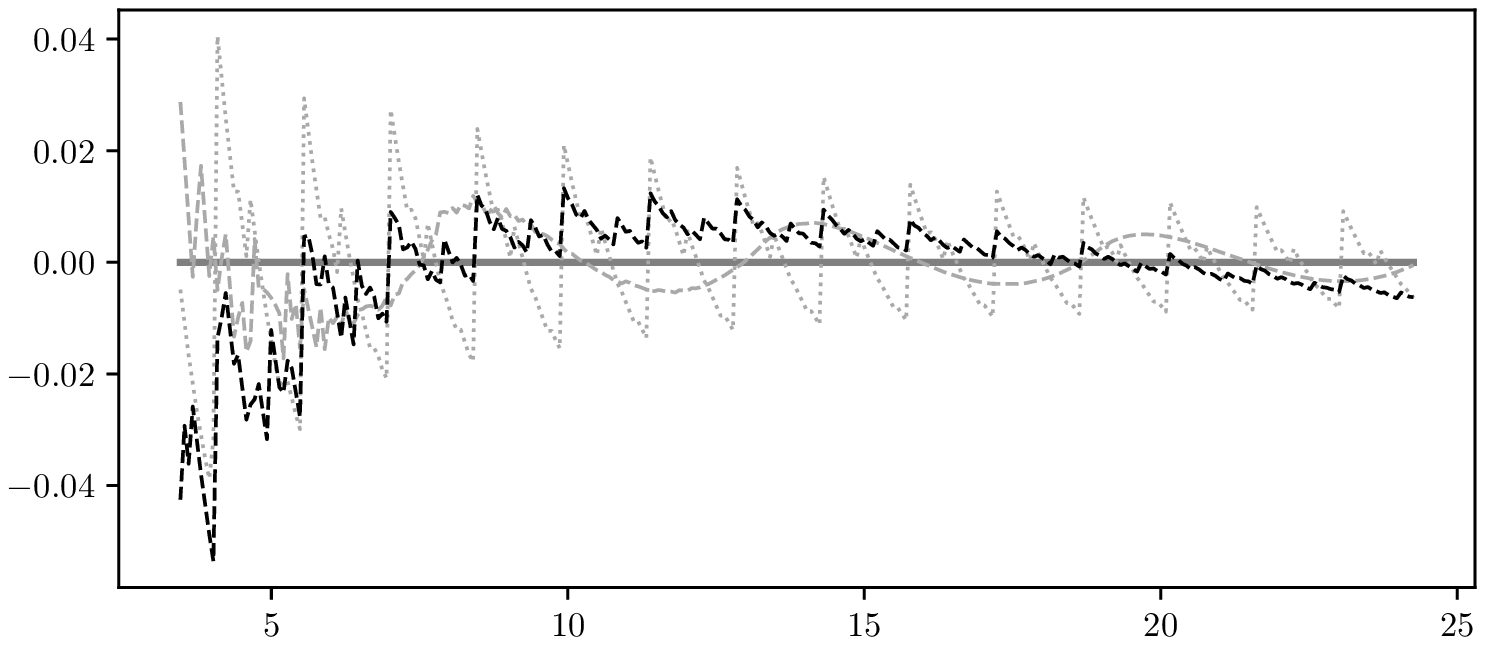}
  \caption{Relative deviations of $\ln N_\epsilon$ from linear
    regression with respect to $-\ln\epsilon$ for uniform Cantor nests
    of dimension $3/4$ with $\alpha=4/5$ (dashed grey line),
    $\alpha=4/3$ (dashed black line) and $\alpha=3$ (dotted grey
    line).}
  \label{fig:experiment-individulal-cantor}
\end{figure*}
To see the behaviour more clearly, in figures
\ref{fig:experiment-individual-bifractal} and
\ref{fig:experiment-individulal-cantor} we show the relative
deviations from linear regression for more detailed samples, with 300
samples for $\epsilon$ between $2^{-5}$ and $2^{-35}$. The dimension
of the set is fixed at $3/4$.

For $\alpha=4/5$ and $\alpha=3$ in both figures the approximation of
the dimension is very good, the numerically obtained dimension is
between $0.751$ and $0.754$, while at the critical point (where we
lose Minkowski-non-degeneracy, $\alpha=4/3$), the approximate
dimension is around $0.82$. At the critical point, the relative error
shows the most bias in both figures.

In figure \ref{fig:experiment-individulal-cantor} we can also observe
the effects of uniform Cantor sets having distinct upper and lower
Minkowski content, with visible oscillations of content at different
scales.

\section{Proofs of main results}

In this section we prove the main results. After the introduction of
the asymptotic notation we use and the lemmas necessary for the proof
of theorems \ref{th:inner} and \ref{th:outer}.

\subsection{Asymptotic notation}

In studying box counting dimensions, asymptotic notation is often
useful. Here we opt for $\sim$-style notation, like in \cite{tricot}
for mutually bounded sequences and functions, corresponding to
$\Theta$ in classical big-Oh notation \cite{knuth:ooO}.

\begin{definition}[Sequence and function equivalence]
  Let $a_n$ and $b_n$ be two positive sequences. We say that $a_k$
  and $b_k$ are {\bf equivalent} and denote it by $$a_k\sim_k b_k$$ if there exists a
  number $M\geq1$ such that  for all
  $k\in\N$,  $$M^{-1}a_k\leq b_k \leq Ma_k.$$

  Similarly, let $I$ be a set and let $f,g\colon I\to\R^+$. We say that
  $f$ and $g$ are {\bf equivalent on $I$}, denoting $$f(x)\sim_{x\in I} g(x)$$ if there
  exists a constant $M\geq 1$ such that for every $x\in I$ we have
  $$M^{-1}g(x)\leq f(x)\leq Mg(x).$$
\end{definition}

When the domain of equivalence is unambiguous, we will use only the
symbol $\sim$ to denote the equivalence.

\subsection{The lemmas and the proofs}

\begin{lemma}\label{composition}
  Let $f,g\colon I\to\R$ and let $\phi\colon J\times\N \to I$ and $m\colon J\to\N$. If
  $$f(x)\sim_{x\in I} g(x)$$ then
  $$\sum_{k=1}^{m(x)}(g\circ\phi)(x,k)\sim_{x\in J} \sum_{k=1}^{m(x)}(h\circ\phi)(x,k).$$
\end{lemma}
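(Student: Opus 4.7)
The plan is to unpack the definition of equivalence and pass it termwise through the finite sum, noting that $m(x)\in\N$ guarantees the sums are finite so no convergence subtleties arise. I interpret the conclusion's $h$ as a typo for $f$, so the goal is
$$\sum_{k=1}^{m(x)}(f\circ\phi)(x,k)\sim_{x\in J} \sum_{k=1}^{m(x)}(g\circ\phi)(x,k).$$

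First I would invoke $f\sim_{x\in I} g$ to obtain a constant $M\geq 1$ such that $M^{-1}g(y)\leq f(y)\leq Mg(y)$ for every $y\in I$. Next, for an arbitrary $x\in J$ and every integer $k$ with $1\leq k\leq m(x)$, the point $\phi(x,k)$ lies in $I$ by hypothesis on the codomain of $\phi$, so substituting $y=\phi(x,k)$ gives
$$M^{-1}(g\circ\phi)(x,k)\leq (f\circ\phi)(x,k)\leq M\,(g\circ\phi)(x,k).$$

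Finally I would sum these inequalities over $k\in\{1,\ldots,m(x)\}$; because the number of summands is finite and each is a positive real, monotonicity of finite summation preserves the bounds and yields
$$M^{-1}\!\!\sum_{k=1}^{m(x)}(g\circ\phi)(x,k)\leq\sum_{k=1}^{m(x)}(f\circ\phi)(x,k)\leq M\!\!\sum_{k=1}^{m(x)}(g\circ\phi)(x,k),$$
which is exactly the defining inequality for equivalence on $J$ with the same constant $M$. There is essentially no obstacle here: the only point that needs a remark is that the equivalence constant transfers without deterioration because $M$ is uniform in $y\in I$ and hence in $(x,k)$, so it does not depend on the (variable) upper summation limit $m(x)$.
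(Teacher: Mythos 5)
Your proof is correct and follows essentially the same route as the paper's: extract the uniform constant $M$ from $f\sim_{x\in I}g$, apply it at each point $\phi(x,k)\in I$, and sum the termwise inequalities over the finite range $1\leq k\leq m(x)$. Your reading of $h$ as a typo for $f$ matches the paper's intent, and your added remark that $M$ is independent of $m(x)$ is the only point worth making explicit.
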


\begin{proof} Let $x\in J$ and $k\in\N$. Since $f\sim_Ig$ we have
  $$ M^{-1}(g\circ \phi)(x,k)\leq (f\circ\phi)(x,k)\leq M(g\circ
  \phi)(x,k).$$ for some $M\geq 1$. Taking the sum of parts of the inequality to $m(x)$,
  we get
  \begin{align*}
    M^{-1}\sum_{k=1}^{m(x)}(g\circ \phi)(x,k)\leq{}& \sum_{k=1}^{m(x)}(f\circ\phi)(x,k)
    \\&\leq M\sum_{k=1}^{m(x)}(g\circ \phi)(x,k),
  \end{align*}
  proving the lemma.
\end{proof}

The following Minkowski non-degeneracy condition is a useful
intuition on what the box-counting dimension represents, namely that
the ambient area of the Minkowski sausage is asymptotically equivalent
to radius to the power of the ``complementary'' dimension of the ambient
space.

\begin{lemma}\label{ndgprop}
  Let $S\subseteq \R^n$ be such that $\dim_BS\equiv d$ and let
  $L>0$. Then that for $\epsilon\langle0,L]$ we
  have $$\lambda(S)_\epsilon\sim \epsilon^{n-d}.$$
\end{lemma}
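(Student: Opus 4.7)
The plan is to split the interval $(0, L]$ at some small $\epsilon_0 > 0$ into a neighbourhood of the origin, where the equivalence follows directly from Minkowski non-degeneracy, and a compact tail $[\epsilon_0, L]$, where both sides of the claimed equivalence are pinched between positive constants.

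First, the hypothesis $\dim_B S \equiv d$ gives $0 < \Mdl(S) \leq \Mdu(S) < +\infty$. Unpacking the definitions of liminf and limsup applied to $A^{n,d}_\epsilon(S) = \lambda(S)_\epsilon / (\gamma_{n-d}\epsilon^{n-d})$, I obtain constants $c_1, c_2 > 0$ and some $\epsilon_0 \in (0, L]$ such that
$$c_1 \epsilon^{n-d} \leq \lambda(S)_\epsilon \leq c_2 \epsilon^{n-d}$$
for every $\epsilon \in (0, \epsilon_0]$; any $c_1 < \gamma_{n-d}\Mdl(S)$ and $c_2 > \gamma_{n-d}\Mdu(S)$ will do, provided $\epsilon_0$ is chosen small enough. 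This already supplies the desired equivalence on the subinterval $(0, \epsilon_0]$.

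For the compact piece $[\epsilon_0, L]$ I would use monotonicity plus compactness. The map $\epsilon \mapsto \lambda(S)_\epsilon$ is non-decreasing, since $(S)_{\epsilon'} \subseteq (S)_\epsilon$ whenever $\epsilon' \leq \epsilon$, and it is finite at $L$ because $S$ is bounded. It is strictly positive at $\epsilon_0$, since $(S)_{\epsilon_0}$ contains a ball of radius $\epsilon_0$ as soon as $S$ is non-empty (the empty case is trivial). The continuous function $\epsilon \mapsto \epsilon^{n-d}$ is likewise bounded above and below by positive constants on this compact interval. Hence the ratio $\lambda(S)_\epsilon / \epsilon^{n-d}$ is pinched between two positive constants on $[\epsilon_0, L]$, and combining these bounds with the ones from the first regime into a single $M \geq 1$ finishes the proof.

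There is no real obstacle here; the statement is essentially a restatement of Minkowski non-degeneracy in the language of the paper's $\sim$-equivalence. The only minor subtlety is that the non-degeneracy hypothesis only constrains behaviour as $\epsilon \to 0$, so the monotonicity-plus-compactness patching at $\epsilon_0$ is what promotes the asymptotic control to a uniform equivalence valid on all of $(0, L]$.
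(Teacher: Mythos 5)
Your argument is correct and matches the paper's proof in substance: the paper likewise derives the bounds near $\epsilon=0$ from the boundedness of $A^{n,d}_\epsilon(S)$ (i.e.\ Minkowski non-degeneracy) and handles the remaining range via monotonicity of $\epsilon\mapsto\lambda(S)_\epsilon$; you have simply spelled out the compactness patching at $\epsilon_0$ that the paper leaves implicit.
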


\begin{proof}
  This is an direct consequence of $A_\epsilon^{n,d}$ being
  bounded from both above and below as $\epsilon\to 0$ and the fact that
  $\lambda(S)_\epsilon$ is continuous and monotonous as a function of
  $\epsilon$.
\end{proof}

\begin{lemma}\label{geomlemma} Let $S\subseteq\R^n$ be a Borel set and let $x, \epsilon>0$. Then,
  $$((x)S)_\epsilon = (x)(S)_{\frac{\epsilon}{x}}$$ and, consequently,
  $$\lambda((x)S)_\epsilon = x^n\lambda(S)_{\frac{\epsilon}{x}}.$$
\end{lemma}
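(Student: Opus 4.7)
The plan is to prove the set-theoretic identity first and then derive the measure identity as a one-line consequence using the scaling behaviour of the Lebesgue measure in $\R^n$.

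For the set identity $((x)S)_\epsilon = (x)(S)_{\epsilon/x}$, I would proceed by a straightforward double-inclusion argument that exploits the homogeneity of the Euclidean distance: for any $u,v\in\R^n$ and any $x>0$, $d(xu,xv)=x\cdot d(u,v)$. Concretely, take $y\in((x)S)_\epsilon$; then there exists $z\in(x)S$, write $z=xs$ with $s\in S$, and the assumption $d(y,z)\leq\epsilon$ becomes $d(y/x,\,s)\leq\epsilon/x$, which says $y/x\in(S)_{\epsilon/x}$ and hence $y\in(x)(S)_{\epsilon/x}$. The reverse inclusion is completely symmetric, starting from $y=xp$ with $p\in(S)_{\epsilon/x}$ and scaling a witness $s\in S$ by $x$.

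For the measure identity I would invoke the classical transformation rule for the Lebesgue measure under the homothety $T\mapsto xT$, namely $\lambda(xT)=x^n\lambda(T)$ for every Borel $T\subseteq\R^n$. Applying this with $T=(S)_{\epsilon/x}$ and combining with the first identity yields
\[
\lambda((x)S)_\epsilon \;=\; \lambda\bigl((x)(S)_{\epsilon/x}\bigr) \;=\; x^n\,\lambda(S)_{\epsilon/x},
\]
as claimed. Borel-measurability of $(S)_{\epsilon/x}$ (needed for the scaling rule to apply cleanly) follows from its being closed, since $(S)_{\epsilon/x}=\{y:d(y,S)\leq\epsilon/x\}$ is the preimage of a closed set under the continuous function $y\mapsto d(y,S)$.

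There is no substantive obstacle; the entire content is the homogeneity of $d$ and of $\lambda$ under scaling. The only point that deserves a sentence of care is making sure that the two inclusions really are symmetric, i.e.\ that scaling by $x$ and by $1/x$ are honest inverses on $\R^n$, which is immediate from $x>0$.
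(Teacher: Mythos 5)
Your approach is essentially the same as the paper's: both proofs rest on the homogeneity of the Euclidean distance under scaling by $x>0$, and the measure identity then follows from $\lambda(xT)=x^n\lambda(T)$. One small imprecision: from $y\in((x)S)_\epsilon$, i.e.\ $d(y,(x)S)\leq\epsilon$, you cannot in general extract a witness $z\in(x)S$ with $d(y,z)\leq\epsilon$, since the infimum need not be attained when $S$ is not closed (e.g.\ $S$ an open interval and $y$ at distance exactly $\epsilon$ from its closure). The paper avoids this by running the whole equivalence through the infimum itself, $\inf_{b\in S}\|a-xb\|\leq\epsilon \iff \inf_{b\in S}\|x^{-1}a-b\|\leq\epsilon/x$; rephrasing your double inclusion this way (or using $(\epsilon+\eta)$-approximate witnesses) closes the gap with no other change.
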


\begin{proof} Suppose $a\in ((x)S)_\epsilon$. This is true if and only
  if $$\inf_{b\in S}\|a-xb\|\leq \epsilon.$$ Since $x>0$, we
  get $$\inf_{b\in S}\left\|{\frac1x}a-b\right\|\leq \frac\epsilon x,$$ so
  $a\in ((x)S)_\epsilon$ if and only if
  $x^{-1}a\in (S)_{\frac \epsilon x}$, which is equivalent to
  $a\in (x)(S)_{\frac \epsilon x}$, proving the lemma.
\end{proof}

\begin{corollary}\label{iksnaentu}
  Under the same assumptions, with $x\geq1$ ($0<x\leq1$) we
  have
  \begin{align*}
    \lambda(S)_{x\epsilon}&\leq x^n\lambda(S)_\epsilon\\
    \bigg(\lambda(S)_{x\epsilon}&\geq x^n\lambda(S)_\epsilon\bigg).
  \end{align*}
\end{corollary}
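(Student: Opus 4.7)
The plan is to prove the $x \geq 1$ case; the $0 < x \leq 1$ case then follows by applying the established inequality with $x$ and $\epsilon$ replaced by $1/x$ and $x\epsilon$, respectively, and rearranging.

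First I would apply Lemma \ref{geomlemma} with $x\epsilon$ in place of $\epsilon$, which yields the identity $\lambda(((x)S))_{x\epsilon} = x^n\lambda(S)_\epsilon$. The target inequality therefore reduces to the purely geometric estimate $\lambda((S)_{x\epsilon}) \leq \lambda(((x)S)_{x\epsilon})$, expressing that stretching $S$ by a factor $x \geq 1$ cannot shrink its $x\epsilon$-sausage. I would establish this by constructing an injective, measure-preserving map from $(S)_{x\epsilon}$ into $((x)S)_{x\epsilon}$. Since the sausage is insensitive to taking the closure, assume $S$ is closed. Off the medial axis of $S$, which is a Lebesgue-null set, every $y \in (S)_{x\epsilon}$ admits a unique nearest point $s(y) \in S$, and one sets
\[\phi(y) = y + (x-1)\,s(y).\]
Because $\phi(y) - x\,s(y) = y - s(y)$ has norm at most $x\epsilon$, the point $\phi(y)$ lies in $((x)S)_{x\epsilon}$.

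The map $\phi$ acts on each Voronoi cell $V_s = \{y \in \R^n : s(y) = s\}$ as a rigid translation by $(x-1)s$, hence isometrically and measure-preservingly. The heart of the argument is the claim that $\phi(V_s)$ is contained in the Voronoi cell of $xs$ in $(x)S$. Setting $u = y-s$ and $v = s'-s$, the Voronoi condition $|y-s|^2 \leq |y-s'|^2$ for $y \in V_s$ reads $2u\cdot v \leq |v|^2$, while the analogous condition $|\phi(y)-xs|^2 \leq |\phi(y)-xs'|^2$ expands to $2u\cdot v \leq x|v|^2$; the hypothesis $x \geq 1$ turns the first into the second. Since the Voronoi cells of $(x)S$ are pairwise disjoint, $\phi$ is globally injective. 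Summing the change-of-variables identity over cells then gives $\lambda((S)_{x\epsilon}) = \lambda(\phi((S)_{x\epsilon})) \leq \lambda(((x)S)_{x\epsilon})$, which combined with the reduction above concludes the proof.

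The main obstacle — and essentially the only nontrivial step — is the Voronoi invariance $\phi(V_s) \subseteq V_{xs}$, which is exactly where $x \geq 1$ enters; the remainder is routine Lebesgue bookkeeping, modulo a standard measurable selection of the nearest-point map $s(\cdot)$.
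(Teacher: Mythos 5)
The paper gives no proof of this corollary at all --- it is presented as if it were immediate from Lemma \ref{geomlemma} --- and your opening reduction correctly exposes that something nontrivial is being asserted: since Lemma \ref{geomlemma} gives $x^n\lambda(S)_\epsilon=\lambda\bigl(((x)S)_{x\epsilon}\bigr)$, the claim is equivalent to $\lambda\bigl((S)_{x\epsilon}\bigr)\leq\lambda\bigl(((x)S)_{x\epsilon}\bigr)$, i.e.\ to the statement that dilating a set by a factor $x\geq1$ cannot decrease the volume of its $r$-neighbourhood. That is a genuine theorem (a special case of Kneser's inequality for parallel bodies), not a formal consequence of the scaling identity, and your displacement map $\phi(y)=y+(x-1)s(y)$ is the right device for proving it. Your Voronoi computation is also correct: $2u\cdot v\leq|v|^2$ together with $x\geq1$ gives $2u\cdot v\leq x|v|^2$, hence $\phi(V_s)\subseteq V_{xs}$ and a.e.\ injectivity.

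The gap is in the last step, ``summing the change-of-variables identity over cells,'' precisely for the sets this paper cares about. When $S$ is uncountable (e.g.\ a Cantor subset of a sphere), the Voronoi decomposition is an uncountable partition whose individual cells are typically Lebesgue-null (at an accumulation point $s$ of $S$ the cell degenerates to a radial segment), so there is no countable sum to take; and the abstract principle you are invoking --- an injective map that restricts to a translation on each cell of a partition is measure-non-decreasing --- is false. For instance, partition the unit square into the vertical segments $\{t\}\times[0,1]$ and send $(t,y)\mapsto(h(t),y)$ where $h$ is a bijection of $[0,1]$ onto a Lebesgue-null Cantor set: this is injective and an isometry on every cell, yet it crushes the square to measure zero. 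Two standard repairs are available. (i) Run your argument for \emph{finite} $S$, where there are finitely many polyhedral cells and the summation is legitimate, then pass to general $S$ through an increasing sequence of finite subsets whose union $D$ is dense in $\overline S$, using $(D)_r=(S)_r$ and monotone convergence. (ii) Note that $s(\cdot)$ is the a.e.\ gradient of the convex function $g(y)=\tfrac12|y|^2-\tfrac12 d(y,S)^2$, so that $\phi=\nabla\bigl(\tfrac12|y|^2+(x-1)g(y)\bigr)$ is the gradient of a convex function with Alexandrov Jacobian $\det\bigl(I+(x-1)D^2g\bigr)\geq1$ a.e., and invoke the area formula for convex gradients to get $\lambda(\phi(A))\geq\lambda(A)$. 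With either repair your proof closes, and it supplies a justification that the paper itself omits.
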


\begin{lemma}\label{asymcore} Let $S\subseteq \R^n$ be a Borel set and
  let $\epsilon>0$. For a sequence $a_k\sim_k 1$,
  $$\sum_{k=1}^m\lambda(S)_{\epsilon a_k}\sim_m m\lambda(S)_\epsilon.$$
\end{lemma}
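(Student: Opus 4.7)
The plan is to reduce the claim to a pointwise equivalence between $\lambda(S)_{\epsilon a_k}$ (as a sequence in $k$) and the constant $\lambda(S)_\epsilon$, and then to invoke Lemma \ref{composition} (or simply sum the two-sided bound directly) to obtain the asymptotic equivalence of the sums.

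First I would unpack the hypothesis $a_k\sim_k 1$: there exists $M\geq 1$ such that $M^{-1}\leq a_k\leq M$ for every $k$. The key step is then showing the uniform estimate
$$M^{-n}\lambda(S)_\epsilon\leq \lambda(S)_{\epsilon a_k}\leq M^n\lambda(S)_\epsilon.$$
I would split this into the two cases $a_k\geq 1$ and $a_k\leq 1$. In the case $a_k\geq 1$, Corollary \ref{iksnaentu} with $x=a_k$ gives the upper bound $\lambda(S)_{\epsilon a_k}\leq a_k^n\lambda(S)_\epsilon\leq M^n\lambda(S)_\epsilon$, and the lower bound $\lambda(S)_{\epsilon a_k}\geq \lambda(S)_\epsilon\geq M^{-n}\lambda(S)_\epsilon$ follows from the monotonicity of $\lambda(S)_\bullet$ in the radius. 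The case $0<a_k\leq 1$ is symmetric: monotonicity provides the upper bound and the second half of Corollary \ref{iksnaentu} provides $\lambda(S)_{\epsilon a_k}\geq a_k^n\lambda(S)_\epsilon\geq M^{-n}\lambda(S)_\epsilon$.

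Once the uniform pointwise bound is in hand, I would either apply Lemma \ref{composition} with $f(k)=\lambda(S)_{\epsilon a_k}$, $g(k)=\lambda(S)_\epsilon$ and the trivial $\phi(m,k)=k$, or simply sum the inequality directly from $k=1$ to $m$ to obtain
$$m\,M^{-n}\lambda(S)_\epsilon \leq \sum_{k=1}^m \lambda(S)_{\epsilon a_k} \leq m\,M^{n}\lambda(S)_\epsilon,$$
which is exactly the claimed $\sim_m$ equivalence with constant $M^n$.

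There is no serious obstacle here; the only subtlety is that Corollary \ref{iksnaentu} gives the ``wrong-direction'' bound in each regime of $a_k$, so one has to remember to invoke monotonicity of $\lambda(S)_\bullet$ to fill in the missing inequality in each case. The estimate is uniform in $\epsilon$, so $\epsilon$ plays no role beyond being a fixed positive parameter.
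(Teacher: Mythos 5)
Your proposal is correct and takes essentially the same route as the paper: both establish the uniform two-sided bound $M^{-n}\lambda(S)_\epsilon\leq\lambda(S)_{\epsilon a_k}\leq M^{n}\lambda(S)_\epsilon$ from Corollary \ref{iksnaentu} together with monotonicity of $\lambda(S)_\bullet$, and then sum from $k=1$ to $m$. The only cosmetic difference is that the paper first uses monotonicity to pass to the radius $\epsilon M$ and then applies the corollary with $x=M$, which avoids your case split on $a_k\gtrless 1$.
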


\begin{proof}
  Since $M^{-1}\leq a_k\leq M$ for some $M\geq 1$, we have
  that
  $$\lambda(S)_{\epsilon a_k}\leq \lambda(S)_{\epsilon M} \leq
  M^n\lambda(S)_\epsilon,$$ and likewise for the lower bound, we
  have $$\lambda(S)_{\epsilon a_k}\sim_k\lambda(S)_\epsilon.$$
  Taking the sum of both sides from $1$ to $m$, we prove the lemma.
\end{proof}

\begin{lemma}[Dense covering lemma]\label{coverlemma}Let $S\subseteq \R^n$ be a Borel set.
  Suppose that for every $\epsilon>0$ there is a set
  $I_\epsilon\subseteq\R^+$ and a (possibly infinite) sequence
  $a_k^{\epsilon}$,  such that
  $$I_\epsilon\subseteq (A^\epsilon)_\epsilon \subseteq (I_\epsilon)_\epsilon.$$
  where $m(\epsilon)$ is the (possibly infinite) length of
  $a_k^{(\epsilon)}$ and $A^\epsilon=\{a_k\,|\,k\leq m(\epsilon)\}$.
  Then,
  $$\lambda\left(\bigcup_{x \in I_\epsilon}(x)S\right)_\epsilon
  \sim\lambda\left(\bigcup_{k=0}^{m(\epsilon)}(a_k^{(\epsilon)})S\right)_\epsilon,$$
  with respect to $\epsilon$.
\end{lemma}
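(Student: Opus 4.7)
The plan is to turn the Hausdorff proximity between $I_\epsilon$ and $A^\epsilon$ inside $\R^+$ into Hausdorff proximity between the two unions of scaled copies of $S$ inside $\R^n$, and then invoke Corollary \ref{iksnaentu} to absorb the resulting doubling of the sausage radius.

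Assuming $S$ is bounded, which is the setting of every subsequent application (where $S\subseteq\Sn1$), the elementary estimate $\|as-xs\|=|a-x|\,\|s\|\leq c\epsilon$ with $c=\sup_{s\in S}\|s\|$ shows that $|a-x|\leq\epsilon$ forces $(a)S\subseteq\bigl((x)S\bigr)_{c\epsilon}$. For clarity I now take $c=1$, which is the relevant case; the general bounded case only changes the final constant. The hypothesis $A^\epsilon\subseteq(I_\epsilon)_\epsilon$ lets me pick for every $a_k^{(\epsilon)}$ a witness $y_k\in I_\epsilon$ with $|a_k^{(\epsilon)}-y_k|\leq\epsilon$, so
\[
\bigcup_{k=1}^{m(\epsilon)}(a_k^{(\epsilon)})S \;\subseteq\; \Bigl(\bigcup_{x\in I_\epsilon}(x)S\Bigr)_{\epsilon},
\]
and the symmetric hypothesis $I_\epsilon\subseteq(A^\epsilon)_\epsilon$ yields the analogous inclusion with the two unions swapped.

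Next I take the $\epsilon$-sausage of both inclusions and use $((T)_\epsilon)_\epsilon\subseteq(T)_{2\epsilon}$ to obtain
\begin{align*}
\Bigl(\bigcup_{k}(a_k^{(\epsilon)})S\Bigr)_{\epsilon} &\subseteq \Bigl(\bigcup_{x\in I_\epsilon}(x)S\Bigr)_{2\epsilon},\\
\Bigl(\bigcup_{x\in I_\epsilon}(x)S\Bigr)_{\epsilon} &\subseteq \Bigl(\bigcup_{k}(a_k^{(\epsilon)})S\Bigr)_{2\epsilon}.
\end{align*}
Passing to Lebesgue measure and applying Corollary \ref{iksnaentu} with $x=2$ upgrades each $2\epsilon$-sausage back to the $\epsilon$-sausage at the price of a factor $2^n$, which is exactly the claimed $\sim$-equivalence with constant $M=2^n$.

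The only real subtlety is the implicit boundedness of $S$ hiding in the very first inequality: the step $|a-x|\leq\epsilon\Rightarrow(a)S\subseteq((x)S)_{c\epsilon}$ requires $c=\sup_{s\in S}\|s\|<\infty$, and failing this one must bake the diameter into the final dilation constant. Everything else is mechanical bookkeeping around the scaling identity $((x)S)_\epsilon=(x)(S)_{\epsilon/x}$ of Lemma \ref{geomlemma} together with the covering hypothesis written out symmetrically.
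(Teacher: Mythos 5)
Your proof is correct and follows essentially the same route as the paper's: both arguments convert the mutual $\epsilon$-proximity of $I_\epsilon$ and $A^\epsilon$ into a $2\epsilon$-sausage inclusion between the two unions (via the triangle inequality) and then absorb the doubled radius with Corollary \ref{iksnaentu}, yielding the constant $2^n$. Your explicit observation that the step $|a-x|\leq\epsilon\Rightarrow(a)S\subseteq((x)S)_{c\epsilon}$ requires $c=\sup_{s\in S}\|s\|<\infty$ is a hypothesis the paper leaves implicit; it is harmless here since every application has $S\subseteq\Sn1$.
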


\begin{proof}

  The inclusions
  $$I_\epsilon\subseteq (A^\epsilon)_\epsilon \subseteq (I_\epsilon)_\epsilon$$
  imply, by triangle inequality,
  $$ (A^\epsilon)_\epsilon\subseteq
  (I_\epsilon)_\epsilon\subseteq (A^\epsilon)_{2\epsilon}.$$

  Hence,
  $$\left(\bigcup_{x \in I_\epsilon}(x)S\right)_\epsilon\subseteq
  \left(\bigcup_{k=0}^{m(\epsilon)}(a_k^{(\epsilon)})S\right)_{2\epsilon},$$
  and by monotonicity of the Lebesgue measure, and applying Corollary
  \ref{iksnaentu} we have
  $$\lambda\left(\bigcup_{x \in I_\epsilon}(x)S\right)_\epsilon
  \leq
  2^n\lambda\left(\bigcup_{k=0}^{m(\epsilon)}(a_k^{(\epsilon)})S\right)_\epsilon$$

  By the same argument applied to
  $(A^\epsilon)_\epsilon \subseteq (I_\epsilon)_\epsilon$,
  we have
  $$
  \lambda\left(\bigcup_{k=0}^{m(\epsilon)}(a_k^{(\epsilon)})S\right)_\epsilon
  \leq
  2^n\lambda\left(\bigcup_{x \in I_\epsilon}(x)S\right)_\epsilon,
  $$
  proving the lemma.
\end{proof}

\begin{lemma}\label{emoneemtwo}
  Let $\alpha>0$. For every $\epsilon>0$ there exist numbers
  $m_1(\epsilon)$ and $m_2(\epsilon)$ (denoted further by $m_1$ and
  $m_2$) satisfying:
  \begin{align*}
    (m_1+1)^{-\alpha}\!\!-(m_1+2)^{-\alpha}\!&< \!2\epsilon\!\leq m_1^{-\alpha}\!\!-(m_1+1)^{-\alpha}\\
    (m_1+1)^{-\alpha}&\leq 2m_2\epsilon \leq m_1^{-\alpha}
  \end{align*}
  such that $$m_1(\epsilon)\sim m_2(\epsilon)\sim\epsilon^{\frac{-1}{1+\alpha}}.$$
\end{lemma}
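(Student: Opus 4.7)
The plan is to prove the lemma in two stages: first establishing the existence of $m_1(\epsilon)$ and $m_2(\epsilon)$ by monotonicity considerations, then deriving the asymptotic estimate via the Mean Value Theorem applied to $f(x)=x^{-\alpha}$.

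For existence, I would introduce the gap function $g_k = k^{-\alpha}-(k+1)^{-\alpha}$ and verify that it is strictly decreasing in $k$ and tends to $0$ (differentiating $x^{-\alpha}-(x+1)^{-\alpha}$ in $x$ gives a negative quantity). Hence for every small enough $\epsilon>0$ there is a unique index $m_1$ with $g_{m_1+1} < 2\epsilon \leq g_{m_1}$; this is exactly the first pair of inequalities in the statement. For $m_2$, the candidate interval
$$\left[\frac{(m_1+1)^{-\alpha}}{2\epsilon},\ \frac{m_1^{-\alpha}}{2\epsilon}\right]$$
has length $g_{m_1}/(2\epsilon) \geq 1$ by the choice of $m_1$, so it contains at least one positive integer, any of which may be chosen as $m_2$.

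For the asymptotic estimates, I would invoke the Mean Value Theorem on $f(x)=x^{-\alpha}$ to obtain the two-sided bound
$$\alpha(k+1)^{-\alpha-1}\ \leq\ g_k\ \leq\ \alpha k^{-\alpha-1}.$$
Plugging $k=m_1$ into the upper bound and combining with $2\epsilon \leq g_{m_1}$ yields $m_1 \leq (\alpha/(2\epsilon))^{1/(\alpha+1)}$. Plugging $k=m_1+1$ into the lower bound and combining with $g_{m_1+1} < 2\epsilon$ yields $m_1+2 > (\alpha/(2\epsilon))^{1/(\alpha+1)}$. Together these sandwich $m_1$ within a constant additive distance of $(\alpha/(2\epsilon))^{1/(\alpha+1)}$, so $m_1\sim \epsilon^{-1/(\alpha+1)}$ for $\epsilon$ in any bounded range $(0,L]$. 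For $m_2$, the defining inequality gives $m_2 \sim m_1^{-\alpha}/\epsilon$, and substituting the estimate for $m_1$ produces $m_2 \sim \epsilon^{\alpha/(\alpha+1)}/\epsilon = \epsilon^{-1/(\alpha+1)}$, closing the proof.

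The main (minor) obstacle is bookkeeping with the additive discrepancy of $2$ between the upper and lower bounds on $m_1$: one must argue that this is absorbed into the multiplicative constant of $\sim$, which is legitimate only once $\epsilon$ is sufficiently small that $(\alpha/(2\epsilon))^{1/(\alpha+1)}$ dominates $2$. For the remaining finitely many values of $\epsilon$ (those in a compact subinterval of $(0,L]$ bounded away from $0$) both $m_1$ and $\epsilon^{-1/(\alpha+1)}$ are bounded above and below by positive constants, so the equivalence extends trivially. No other step presents genuine difficulty.
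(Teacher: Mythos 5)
Your proposal is correct and follows essentially the same route as the paper: both hinge on the gap asymptotics $k^{-\alpha}-(k+1)^{-\alpha}\sim k^{-(\alpha+1)}$ (which you make explicit via the Mean Value Theorem, where the paper merely asserts it) and both place $m_2$ in the interval $\left[\frac{(m_1+1)^{-\alpha}}{2\epsilon},\frac{m_1^{-\alpha}}{2\epsilon}\right]$, your length-at-least-one observation being a slightly cleaner substitute for the paper's ceiling-plus-contradiction argument. The only blemish is the phrase ``remaining finitely many values of $\epsilon$'' (it is a continuum, and for large $\epsilon$ no admissible $m_1\geq1$ exists at all), but since the equivalence only matters as $\epsilon\to0$ this does not affect the substance.
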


\begin{proof}
  The existence of $m_1$ and the required asymptotic behaviour of $m_1$
  follows from the fact that
  $$n^{-\alpha}-(n+1)^{-\alpha}\sim_n n^{-(\alpha+1)}.$$

  For $m_2$ we observe that $2\epsilon$ fits between $m_1^{-\alpha}$
  and $(m_1+1)^{-\alpha}$, so we can
  construct
  $$m_2(\epsilon)=\left\lceil\frac{(m_1(\epsilon)+1)^{-\alpha}}{2\epsilon}\right\rceil\leq\frac{(m_1+1)^{-\alpha}}{2\epsilon}+1.$$

  Suppose $2\epsilon m_2>m_1^{-\alpha}$. Then
  \begin{align*}
    m_1^{-\alpha}&<
                   2\epsilon\left(\frac{(m_1+1)^{-\alpha}}{2\epsilon}+1\right)\\
                 &=(m_1+1)^{-\alpha}+2\epsilon\\
                 &\leq m_1^{-\alpha}
  \end{align*}
  so we have arrived at a contradiction.
\end{proof}

Now we turn to the proofs of our main theorems.

\begin{proof}[Proof of Theorem \ref{th:inner}]
  Let $\epsilon>0$. We apply Lemma \ref{emoneemtwo} to obtain the
  functions $m_1(\epsilon)$ and $m_2(\epsilon)$.

  We define the $\epsilon$-tail, $\mo T_\epsilon(S)$ as the part of
  the nest consisting of $\epsilon$-isolated components,
  $$\mo T_\epsilon S = \bigcup_{k=1}^{m_1}\mo (k^{-\alpha})S,$$
  and we define the $\epsilon$-core, $\mo C_\epsilon(S)$ as the remaining components
  $$\mo C_\epsilon S = \bigcup_{k>m_1}\mo (k^{-\alpha})S.$$

  Now, we have that
  $$\lambda\left(\F_\alpha S\right)_\epsilon= \lambda\left(\mo
    T_\epsilon S\right)_\epsilon+\lambda\left(\mo C_\epsilon
    S\right)_\epsilon.$$

  By construction, $2\epsilon m_2(\epsilon)$ is an upper bound on the Hausdorff distance of
  the limit set $\{0\}$ of the whole nest and the $\epsilon$-core.

  First, we find the asymptotic behaviour of the core by computing
  \begin{align}
    \lambda\left(\mo C_\epsilon S\right)_\epsilon
    ={}&\lambda\left(\bigcup_{k>m_1}(k^{-\alpha})S\right)_\epsilon\notag\\
        &\sim{}\lambda\left(\bigcup_{k=0}^{m_2}(2k\epsilon)S\right)_\epsilon\label{cn:c1}\\
       &=\sum_{k=0}^{m_2}\lambda\left((2k\epsilon)S\right)_\epsilon\notag\\
       &=\sum_{k=0}^{m_2}(2k\epsilon)^n\lambda(S)_{\frac1{2k}}\label{cn:c2}\\
    \sim{}&\epsilon^n\sum_{k=0}^{m_2}k^n\cdot k^{\delta-n}
            \sim{}\epsilon^n\sum_{k=0}^{m_2}k^\delta\label{cn:c3}\\
    \sim{}&\epsilon^nm_2^{1+\delta}
            \sim{}\epsilon^{n-\frac{\delta+1}{\alpha+1}},\notag
  \end{align}
  applying Lemma \ref{coverlemma} twice at step (\ref{cn:c1}), first
  time to the infinite sequence of $k^{-\alpha}$ for $k>m_1$ and then
  to the finite sequence $2\epsilon k$. At step (\ref{cn:c2}) we apply
  Lemma \ref{geomlemma} and to obtain (\ref{cn:c3}) we apply
  Proposition \ref{ndgprop} and Lemma \ref{composition}.

  For the tail, we have
  \begin{align}
    \lambda\left(\mo T_\epsilon S\right)_\epsilon
    &= \sum_{k=1}^{m_1}\lambda\left((k^{-\alpha})S\right)_\epsilon\notag\\
    &= \sum_{k=1}^{m_1}k^{-n\alpha}\lambda(S)_{k^\alpha\epsilon}\label{cn:t1}\\
    &\sim \sum_{k=1}^{m_1}k^{-n\alpha}\left(k^\alpha\epsilon\right)^{n-\delta}\notag\\
     &\sim \epsilon^{n-\delta}\sum_{k=1}^{m_1}k^{-\alpha\delta},\label{cn:t2}
  \end{align}
  where we apply Lemmas \ref{composition} and \ref{geomlemma} at step
  (\ref{cn:t1}) and Proposition \ref{ndgprop} at step (\ref{cn:t2}).

  Now, if $\alpha\delta<1$, we have
  $$ \lambda\left(\mo T_\epsilon S\right)_\epsilon\sim
  \epsilon^{n-\delta}m_1^{1-\alpha\delta}\sim
  \epsilon^{n-\frac{\delta+1}{\alpha+1}},$$ in which case we
  have $$\dim_BF_\alpha(S)\equiv \frac{\delta+1}{\alpha+1}.$$

  If $\alpha\delta>1$, we set $\beta=\alpha\delta-1$, and therefore
  the series $\sum_{k=1}^{\infty}k^{-\alpha\delta}$ converges, so we have
  $$ \lambda\left(\mo T_\epsilon S\right)_\epsilon\sim\epsilon^{n-\delta},$$ and so the total area is
  $$\lambda\left(\F_\alpha S\right)_\epsilon
  \sim \epsilon^{n-\delta}+\epsilon^{n-\frac{\delta+1}{\alpha+1}}=
  \epsilon^{n-\delta}(1+\epsilon^\frac{\beta}{\alpha+1}),$$
  proving $$\dim_B F_\alpha(S)\equiv \delta.$$

  In the case $\alpha\delta=1$ we
  have $$\frac{\delta+1}{\alpha+1}=\delta$$ so for the total area we have
  $$\lambda\left(\F_\alpha S\right)_\epsilon\sim\epsilon^{n-\delta}(1-\ln\epsilon).$$

  For any $\zeta>0$ we have
  $$\frac{\lambda\left(\F_\alpha S\right)_\epsilon}{\epsilon^{n-(\delta+\zeta)}}
  \sim \epsilon^{\zeta}(1-\ln\epsilon)\to 0$$ for the upper dimension and
  $$\frac{\lambda\left(\F_\alpha S\right)_\epsilon}{\epsilon^{n-(\delta-\zeta)}}
  \sim \epsilon^{-\zeta}(1-\ln\epsilon)\to+\infty,$$
  for the lower dimension, proving $$\dim_BF_{\frac1\delta}S=\delta,$$ a Minkowski-degenerate case.
\end{proof}

\begin{proof}[Proof of Theorem \ref{th:outer}]
  Let $\epsilon>0$. Again, we introduce $m_1$ and $m_2$ using Lemma
  \ref{emoneemtwo}.  Also, as in the previous proof, we define the
  $\epsilon$-tail and $\epsilon$-core; the $\epsilon$-tail,
  $\mo T_\epsilon S$ as the part of the nest consisting of
  $\epsilon$-isolated components, and the $\epsilon$-core,
  $\mo C_\epsilon(S)$ as the remaining components.

  Now, we have that
  $$\lambda\left(\O_\alpha S\right)_\epsilon= \lambda\left(\mo
    T_\epsilon S\right)_\epsilon+\lambda\left(\mo C_\epsilon
    S\right)_\epsilon.$$

  For the tail, we compute
  \begin{align}
    \lambda\left(\mo T_\epsilon S\right)_\epsilon
    &= \sum_{k=1}^{m_1}\lambda\left((1-k^{-\alpha})S\right)_\epsilon\label{on:c1}\\
    &= \sum_{k=1}^{m_1}\left(1-k^{-\alpha}\right)^n
      \lambda(S)_{\epsilon(1-k^{-\alpha})^{-1}}\label{on:c2}\\
    &\sim m_1\lambda(S)_\epsilon \sim \epsilon^{n-\delta-\frac1{\alpha+1}},
  \end{align}
  applying Lemma \ref{geomlemma} at step (\ref{on:c1}) and Lemmas
  \ref{composition} and \ref{asymcore} at step (\ref{on:c2}).

  For the core, we have
  \begin{align}
    \lambda\left(\mo C_\epsilon S\right)_\epsilon
    ={}&\lambda\left(\bigcup_{k>m_1}(1-k^{-\alpha})S\right)_\epsilon\label{on:t1}\\
    \sim{}&\lambda\left(\bigcup_{k=0}^{m_2}{(1-2k\epsilon})S\right)_\epsilon\label{on:t2}\\
       &=\sum_{k=0}^{m_2}(1-2k\epsilon)^n\lambda(S)_{\frac\epsilon{1-2k\epsilon}}\label{on:t3}\\
    \sim{}&m_2 \lambda(S)_\epsilon\sim\epsilon^{n-\delta-\frac1{\alpha+1}},\label{on:t4}
  \end{align}
  by applying Lemma \ref{coverlemma} at step (\ref{on:t1}), Lemma
  \ref{geomlemma} at step (\ref{on:t2}). To apply Lemma \ref{asymcore} at step
  (\ref{on:t3}) we note that
  $$1-m_1^{-\alpha}< 1-2k\epsilon\leq 1$$ from the defining condition on $m_2$ (Lemma \ref{emoneemtwo}) on
$m_2$. At step (\ref{on:t4}) we also apply Lemma \ref{composition}.

  Thus, we have shown that $$\dim_B\O_\alpha S\equiv \delta+\frac1{\alpha+1}.$$
\end{proof}

\section{Closing remarks and open problems}

In this article we have shown that an $\alpha$-regular fractal nests
based on a set $S$ of non-degenerate box counting dimension $\delta$,
has dimensions
$$\dim_B(\F_\alpha S) = \frac{\delta+1}{\alpha+1}\hbox{ or
}\dim_B(\F_\alpha S) = \delta$$ for nests of centre type, with
$\alpha\delta<1$ for the first case and $\alpha\delta\geq 1$ for the second, and we have shown
$$\dim_B(\O_\alpha S) = \delta+\frac1{\alpha+1}$$ for the outer type.

These results concur with examples given in \cite{lazura} for
hyper-spheres $\Sn1\subseteq\R^n$, where
\begin{align*}
  \dim_B\F_\alpha\Sn1&=\max\left\{n-1, \frac n{\alpha+1}\right\},\\
  \dim_B\O_\alpha\Sn1&=n -\frac \alpha{\alpha+1}.
\end{align*}

We have also shown that for sets $\F_\alpha S_\delta$ of dimension $d$
based on Minkowski non-degenerate sets $S_\delta$ of dimension
$\delta$ we have the following relations:
\begin{align*}
  \alpha&\in\left\langle{ \frac1d-1, \frac1d}\right\rangle,\\
  \delta&=d\alpha+d-1.
\end{align*}

These relationships allow us to study the efficacy of simpler
numerical techniques for fractal sets presented in this article.

It is well known \cite{falconer, federer, tricot} that the box
counting dimension is not continuous with respect to countable
unions. The exact behaviour in examples given here points to a subtler
structure that explains the formulas for fractal nests.

For the outer type of nests, this has already been discussed in
\cite[Remark 6]{zuzu}, but the behaviour of centre-type nests remains
less well understood. The formula for the centre-type nests obtained
here points to a multiplicative operation on the dimensions. Such
behaviour of dimensions is well known mostly in vector spaces for
tensor products. We propose here that there exists an abstract
tensor-like product $\otimes$ on Borel sets such that
$$\F_\alpha S\approx(S\times I)\otimes\F_\alpha\{1\}$$ with $\approx$
guaranteeing the existence of a bi-Lipschitz map between the sets.
We expect the following to hold,
$$\dim_B (A\otimes B) = \dim_BA\cdot\dim_BB,$$ along with $\otimes$ being
distributive with respect to Cartesian products (which behave
additively).

Perhaps such a product could be found through an analogy between
Lipschitz (or bi-Lipschitz) maps that don't increase or preserve the
box dimension, respectively, in uniform spaces \cite[Chapter
2]{bourbaki:GT} and linear maps in vector spaces, possibly using a kind
of a smash product \cite[pg.\ 435]{bredon:TG}.

\section*{Acknowledgements}

I would like to thank Vedran \v Ca\v ci\'c, Ida Dela\v c Marion, and
Irina Puci\'c for their helpful input and comments and my wife Marija
Gali\'c Mili\v ci\'c for her patience.

All of the code used to generate the figures in this article is available at \href{https://github.com/sinisa-milicic/nests1}{https://github.com/sinisa-milicic/nests1}.

\bibliographystyle{alpha}
\bibliography{nests1}

\newcommand{\etalchar}[1]{$^{#1}$}
\begin{thebibliography}{PVG{\etalchar{+}}11}

\bibitem[{Ado}99]{postscript}
{Adobe Systems Incorporated}.
\newblock {\em PostScript language reference, third edition}.
\newblock Addison-Wesley, 1999.

\bibitem[Bou91]{bourbaki:GT}
Nicolas Bourbaki.
\newblock {\em General topology. Chapters 1--4.}
\newblock Springer-Verlag, Heidelberg, 1991.

\bibitem[Bre93]{bredon:TG}
Glen~E Bredon.
\newblock {\em Topology and Geometry (Graduate Texts in Mathematics)}.
\newblock Springer-Verlag, New York, 1993.

\bibitem[Fal14]{falconer}
Kenneth Falconer.
\newblock {\em Fractal Geometry: Mathematical Foundations and Applications, 3rd
  Edition}.
\newblock Wiley, Feb 2014.

\bibitem[FC07]{jiangchen}
Jiang Feng and Shirong Chen.
\newblock The {M}inkowski content of uniform {C}antor set.
\newblock {\em Acta Mathematica Scientia}, 27(4):641--647, 2007.

\bibitem[Fed69]{federer}
Herbert Federer.
\newblock {\em {Geometric Measure Theory}}, volume 153 of {\em Grundlehren der
  mathematischen Wissenschaften}.
\newblock Springer-Verlag, Berlin, 1969.

\bibitem[Knu76]{knuth:ooO}
Donald~E Knuth.
\newblock Big omicron and big omega and big theta.
\newblock {\em ACM Sigact News}, 8(2):18--24, 1976.

\bibitem[KP99]{krantzparks}
Steven~G. Krantz and Harold~R. Parks.
\newblock {\em The Geometry of domains in space}.
\newblock Birkh':auser, Boston, 1999.

\bibitem[LP93]{lapidus:cantor}
Michel~L Lapidus and Carl Pomerance.
\newblock The {R}iemann zeta-function and the one-dimensional {W}eyl-{B}erry
  conjecture for fractal drums.
\newblock {\em Proceedings of the London Mathematical Society}, 3(1):41--69,
  1993.

\bibitem[LR{\v Z}17]{lazura}
Michel~L Lapidus, Goran Radunovi{\'c}, and Darko {\v Z}ubrini{\'c}.
\newblock {\em Fractal zeta functions and fractal drums: higher-dimensional
  theory of complex dimensions}.
\newblock Springer, 2017.

\bibitem[Pes97]{pesin}
Yakov~B Pesin.
\newblock {\em Dimension theory in dynamical systems: contemporary views and
  applications}.
\newblock University of Chicago Press, 1997.

\bibitem[PVG{\etalchar{+}}11]{scikit}
F.~Pedregosa, G.~Varoquaux, A.~Gramfort, V.~Michel, B.~Thirion, O.~Grisel,
  M.~Blondel, P.~Prettenhofer, R.~Weiss, V.~Dubourg, J.~Vanderplas, A.~Passos,
  D.~Cournapeau, M.~Brucher, M.~Perrot, and E.~Duchesnay.
\newblock Scikit-learn: Machine learning in {P}ython.
\newblock {\em Journal of Machine Learning Research}, 12:2825--2830, 2011.

\bibitem[Res13]{maja}
Maja Resman.
\newblock Invariance of the normalized {M}inkowski content with respect to the
  ambient space.
\newblock {\em Chaos, Solitons \& Fractals}, 57:123--128, 2013.

\bibitem[Tri93]{tricot}
Claude Tricot.
\newblock {\em Curves and fractal dimension}.
\newblock Springer-Verlag, New York, 1993.

\bibitem[{\v Z}{\v Z}05]{zuzu}
Darko {\v Z}ubrini{\'c} and Vesna {\v Z}upanovi{\'c}.
\newblock Fractal analysis of spiral trajectories of some planar vector fields.
\newblock {\em Bulletin des Sciences Math{\'e}matiques}, 129(6):457--485, 2005.

\end{thebibliography}

\end{document}